\newcommand{\harm}{\mathcal{H}}
\DeclareMathOperator{\Imm}{Im}
\DeclareMathOperator{\Id}{Id}
\newcommand{\R}{\mathbb R}
\newcommand{\vect}{\mathfrak{X}}
\newcommand{\lie}{\mathcal{L}}                             
\newcommand{\eps}{\epsilon}
\newcommand{\epseta}{\eps_\eta}
\newcommand{\id}{\mathrm{Id}}
\newcommand{\fol}{\mathcal{F}}
\newcommand{\folw}{\mathcal{\tilde F}}
\newcommand*{\longhookrightarrow}{\ensuremath{\lhook\joinrel\relbar\joinrel\rightarrow}}
\DeclareMathOperator{\lef}{Lef}
\newtheorem{theorem}{Theorem}[section]
\newtheorem{lemma}[theorem]{Lemma}
\newtheorem{corollary}[theorem]{Corollary}
\newtheorem{proposition}[theorem]{Proposition}
\theoremstyle{definition}
\theoremstyle{remark}
\newtheorem{remark}[theorem]{Remark}
\numberwithin{equation}{section}
\title{Hard Lefschetz Theorem for Vaisman manifolds}
\author[B. Cappelletti-Montano]{Beniamino Cappelletti-Montano}
 \address{Dipartimento di Matematica e Informatica, Universit\`a degli Studi di
 Cagliari, Via Ospedale 72, 09124 Cagliari, Italy}
 \email{b.cappellettimontano@gmail.com}
\author[A. De Nicola]{Antonio De Nicola}
 \address{CMUC, Department of Mathematics, University of Coimbra, 3001-501 Coimbra, Portugal}
 \email{antondenicola@gmail.com}
 \author[J.~C. Marrero]{Juan Carlos Marrero}
 \address{Unidad Asociada ULL-CSIC ``Geometr{\'\i}a Diferencial y Mec\'anica Geo\-m\'e\-tri\-ca''
Departamento de Matem\'aticas, Estad{\'\i}stica e Investigaci\'on Operativa, Facultad de Ciencias, Universidad de La Laguna, La Laguna, Tenerife, Spain}
 \email{jcmarrer@ull.edu.es}
\author[I. Yudin]{Ivan Yudin}
 \address{CMUC, Department of Mathematics, University of Coimbra, 3001-501 Coimbra, Portugal}
 \email{yudin@mat.uc.pt}
\subjclass[2010]{Primary 53C25, 53C55, 53D35}
\keywords{Hard Lefschetz Theorem, Vaisman manifolds, Sasakian manifolds, locally conformal symplectic manifolds, contact manifolds}
\thanks{This work was partially supported by CMUC -- UID/MAT/00324/2013, funded by the Portuguese
 Government through FCT/MEC and co-funded by the European Regional Development Fund through the Partnership Agreement PT2020 (A.D.N. and I.Y.),
 by MICINN (Spain) and European Union (Feder)  grants MTM2012-34478 and MTM 2015-64166-C2-2P (A.D.N. and J.C.M.), by the European Community IRSES-project GEOMECH-246981 (J.C.M.)
 and by Prin 2010/11 -- Variet\`{a} reali e complesse: geometria, topologia e analisi armonica -- Italy (B.C.M.), and by the exploratory research project in the frame of Programa Investigador FCT IF/00016/2013 (I.Y.). J.C.M. acknowledges the Centre for Mathematics of the University of Coimbra in Portugal for its support and hospitality in a visit where a part of this work was done.}
\begin{document}

\begin{abstract}
We establish a Hard Lefschetz Theorem for the de Rham cohomology of compact Vaisman manifolds. A similar result is proved for the basic cohomology with respect to the Lee vector field. Motivated by these results, we introduce the notions of a Lefschetz and of a basic Lefschetz locally conformal symplectic (l.c.s.)  manifold of the first kind.  We prove that the two notions  are equivalent if there exists a Riemannian metric such that the Lee vector field is unitary and parallel and its metric dual $1$-form coincides with the Lee $1$-form. Finally, we discuss several examples of compact l.c.s. manifolds of the first kind which do not admit compatible Vaisman metrics.
\end{abstract}


\maketitle

\section{Introduction}

\subsection{Antecedents and motivation}
\label{Antecedents-motivation}

It is well known that the global scalar pro\-duct on the space of $k$-forms in a
compact oriented Riemannian manifold $M$ of dimension $m$ induces an isomorphism
between the $k$th de Rham cohomology group $H^k(M)$ and the dual space of the
 $(m-k)$th de Rham cohomology group $H^{m-k}(M)$. So, using that the dimension
 of the de Rham cohomology groups is finite, we deduce the Poincar\'e-duality:
 the dimension of $H^k(M)$ is equal to the dimension of $H^{m-k}(M)$.

In addition, in some special cases, one may define a cano\-nical isomorphism
between the vector spaces $H^k(M)$ and $H^{m-k}(M)$. For instance, if $M$ is a
compact K\"ahler manifold of dimension $2n$ then, using the $(n-k)$th exterior
power of the symplectic $2$-form, one obtains an explicit isomorphism bet\-ween
$H^k(M)$ and $H^{2n-k}(M)$. This is known as the Hard Lefschetz isomorphism
for compact K\"ahler manifolds (see \cite{Ho}).

On the other hand, it is well known that the odd dimensional counterparts of
K\"ahler manifolds are Sasakian and co-K\"ahler manifolds (see \cite{Bl,BoGa}).
In these cases, one may also obtain a Hard Lefschetz isomorphism  as shown
in~\cite{CaNiYu} for Sasakian and in~\cite{tcm} for co-K\"ahler manifolds.
For a compact co-K\"ahler manifold the Hard
Lefschetz isomorphism depends only on the underlying cosymplectic structure and
for a Sasakian manifold it depends only on the corresponding contact structure.

A particular class of Hermitian manifolds which are related to K\"ahler, co-K\"ahler and Sasakian manifolds are Vaisman manifolds introduced
in \cite{Va0,Va1}. A Vaisman manifold is a locally conformal K\"ahler manifold $M$ which has non-zero parallel Lee $1$-form $\omega$.  This last condition is quite important in the complex case. However, we remark that in the compact quaternionic setting
it is not restrictive. In fact, for a non-hyperK\"ahler compact locally conformal hyperK\"ahler
manifold or for a non-quaternionic K\"ahler compact locally conformal quaternionic K\"ahler manifold one can always assume
that the Lee $1$-form is parallel (see the paper by Ornea and Piccinni \cite{OrPi}).

In this paper we will assume, without loss of generality, that the Lee $1$-form $\omega$ of a Vaisman manifold $M$ is unitary. If $J$ is the complex structure of $M$ then the $1$-form $\eta := -\omega \circ J$ is called the anti-Lee $1$-form of $M$. The \emph{Lee vector field} $U$ is defined as the metric dual of $\omega$, while the metric dual of $\eta$ is called the \emph{anti-Lee vector field} and is denoted by $V$.
The following properties of Vaisman manifolds can be found in
\cite{DrOr} and \cite{MaPa}:
\begin{itemize}
\item[-]
	the couple $(U, V)$ defines a flat foliation of rank $2$ on $M$
	which is transversely K\"ahler;
\item[-]
	the foliation on $M$ defined by $V$ is transversely co-K\"ahler;
\item[-]
	the orthogonal bundle to the foliation on $M$ defined by $U$ is integrable and the leaves of the corresponding foliation
are $c$-Sasakian manifolds.
\end{itemize}

The above results show that there is a close relationship between  Vaisman
manifolds on the one side and K\"ahler, co-K\"ahler, and Sasakian manifolds on the other side.
In fact, the mapping torus of a compact Sasakian manifold has a canonical Vaisman structure.
Moreover, in \cite{OrVe, OrVe2}, Ornea and Verbitsky proved that any compact
Vaisman manifold of dimension $2n+2$  is diffeomorphic to the mapping
torus of a compact Sasakian manifold $N$ of dimension $2n+1$.
However, note that this diffeomorphism does not preserve in general the geometric structure of the manifold.

In view of the above mentioned Hard Lefschetz theorems for compact Sasakian and co-K\"ahler manifolds,
a natural question arise: is there a Hard Lefschetz theorem for a compact Vaisman manifold?
The aim of this paper is to give a positive answer to this question.

\subsection{The results in the paper}\label{results}
The main result of this paper is the Hard Lefschetz theorem for a compact Vaisman manifold which may be formulated as follows.
\begin{theorem}\label{main-theorem}
Let $M$ be a compact Vaisman manifold of dimension $2n+2$ with Lee $1$-form
$\omega$, anti-Lee $1$-form $\eta$, Lee vector field $U$, and anti-Lee vector
field $V$.
 Then for each integer $k$ between $0$ and $n$,
 there exists an isomorphism
\[
\lef_k: H^{k}(M) \longrightarrow H^{2n+2-k}(M)
\]
 which can be computed by using the following properties:
\begin{enumerate}[$(i)$]
\item
for every $[\gamma] \in H^{k}(M)$, there is $\gamma' \in [\gamma]$ such that
\[
{\mathcal L}_{U}\gamma' = 0, \; \; i_V\gamma' = 0, \; \; L^{n-k+2}\gamma' = 0, \; \; L^{n-k+1}\epsilon_{\omega}\gamma' = 0;
\]
\item
if $\gamma' \in [\gamma]$ satisfies the conditions in $(i)$ then
\[
\lef_{k}[\gamma] = [\epsilon_{\eta}L^{n-k}(Li_{U}\gamma' - \epsilon_{\omega}\gamma')].
\]
\end{enumerate}
\end{theorem}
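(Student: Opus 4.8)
The plan is to reduce the de Rham cohomology of $M$ to the transverse K\"ahler geometry of the rank two foliation $\fol$ generated by $U$ and $V$, and then transport the classical Hard Lefschetz isomorphism of that transverse structure back to $M$. First I would collect the algebraic data. Since $\omega$ is parallel and unitary, $U$ is a parallel (hence Killing) unit vector field, $V=JU$ is Killing, $[U,V]=0$, and $\omega,\eta,J,\Omega$ are all invariant under the flows of $U$ and $V$; moreover $d\omega=0$, $\omega(U)=\eta(V)=1$, $\omega(V)=\eta(U)=0$, and $\Omega=\omega\wedge\eta-d\eta$, so that the transverse K\"ahler form $\Omega_T:=\Omega-\omega\wedge\eta=-d\eta$ is basic for $\fol$ with $i_U\Omega_T=i_V\Omega_T=0$. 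The Lefschetz operator is $L=\Omega_T\wedge(\cdot)$, and because $\Omega_T=-d\eta$ is exact, $L$ commutes with $d$ on all of $\Omega^\bullet(M)$. I would then record the $\mathfrak{sl}(2)$-relations among $L$, its metric adjoint $\Lambda$ and the degree operator, together with the (anti)commutation rules linking $L,\Lambda$ to $\epsilon_\omega,\epsilon_\eta,i_U,i_V$; these identities are routine but drive everything that follows.

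The heart of the argument is a finite-dimensional basic model for $H^\bullet(M)$. Let $T$ be the closure, in the compact isometry group of $M$, of the abelian subgroup generated by the flows of $U$ and $V$; then $T$ is a torus preserving the whole Vaisman structure, and averaging forms over $T$ is a chain homotopy equivalence of $(\Omega^\bullet(M),d)$ onto the subcomplex of $T$-invariant forms. A form is $T$-invariant precisely when it is annihilated by $\mathcal{L}_U$ and $\mathcal{L}_V$, and decomposing such a form by its $i_U,i_V$ content (using $\mathcal{L}_U\omega=\mathcal{L}_U\eta=\mathcal{L}_V\omega=\mathcal{L}_V\eta=0$) identifies the $T$-invariant subcomplex with $\mathcal{C}^\bullet:=A_B^\bullet\oplus\omega\wedge A_B^\bullet\oplus\eta\wedge A_B^\bullet\oplus\omega\wedge\eta\wedge A_B^\bullet$, where $A_B^\bullet$ is the basic complex of $\fol$ and the differential is determined by the basic differential $d_B$, by $d\omega=0$ and by $d\eta=-\Omega_T$. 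Thus $H^\bullet(M)\cong H^\bullet(\mathcal{C}^\bullet)$. Since $\fol$ is a Riemannian foliation on a compact manifold, transverse Hodge theory for such foliations (El Kacimi) gives that $H_B^\bullet$ is finite dimensional with a transverse Hodge decomposition, transverse Poincar\'e duality (transverse dimension $2n$) and the transverse Hard Lefschetz isomorphisms $L^{n-j}\colon H_B^j\to H_B^{2n-j}$ with the usual primitive decomposition. Unwinding $\mathcal{C}^\bullet$ as the total complex of two copies of the mapping cone of $L$ acting on $A_B^\bullet$, a two-column spectral sequence argument shows that for $0\le k\le n$ every class in $H^k(\mathcal{C}^\bullet)$ has a representative $\gamma'=\alpha+\omega\wedge\beta$ with $\alpha\in A_B^k$, $\beta\in A_B^{k-1}$ both $d_B$-closed and transversely primitive; these four requirements are exactly the conditions in part $(i)$, since $\mathcal{L}_U\gamma'=0$ and $i_V\gamma'=0$ single out the $U$-invariant forms with vanishing $\eta$- and $\omega\wedge\eta$-components, while $L^{n-k+2}\gamma'=0$ and $L^{n-k+1}\epsilon_\omega\gamma'=0$ are precisely transverse primitivity of $\beta$ (degree $k-1$) and of $\alpha$ (degree $k$) in the $2n$-dimensional transverse structure.

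It then remains to verify $(ii)$. For $\gamma'=\alpha+\omega\wedge\beta$ as above one computes directly, using $i_U\omega=1$, $i_U\alpha=i_U\beta=0$ and $\omega\wedge\omega=0$, that $\epsilon_\eta L^{n-k}(Li_U\gamma'-\epsilon_\omega\gamma')=\eta\wedge L^{n-k+1}\beta+\omega\wedge\eta\wedge L^{n-k}\alpha$, which lies in $\mathcal{C}^{2n+2-k}$; it is closed because $d_B\alpha=d_B\beta=0$, $dL=Ld$ and $d(\omega\wedge\eta)=\omega\wedge\Omega_T$, while $L^{n-k+2}\beta=0$ and $L^{n-k+1}\alpha=0$ by primitivity, and one checks its class depends only on $[\gamma]$. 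Under the model isomorphism $\lef_k$ becomes the direct sum of the transverse Hard Lefschetz maps $L^{n-k}$ on primitive degree-$k$ classes and $L^{n-k+1}=L^{n-(k-1)}$ on primitive degree-$(k-1)$ classes; since $H^{2n+2-k}(\mathcal{C}^\bullet)$ is built out of exactly these target primitive spaces, sitting in transverse degrees $2n-k$ and $2n-k+1$, the map $\lef_k$ is an isomorphism.

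I expect the main obstacle to be the second step: showing that $T$-averaging really produces the subcomplex $\mathcal{C}^\bullet$, and then computing $H^\bullet(\mathcal{C}^\bullet)$ finely enough to extract the canonical representative $\gamma'$ and match it with the four conditions of $(i)$. This is where compactness, the parallelism of $\omega$, and the full transverse K\"ahler package for $\fol$ (finiteness of basic cohomology, transverse Hodge decomposition, transverse Poincar\'e duality and transverse Hard Lefschetz) all have to be used with care, together with careful degree bookkeeping in the double complex; once that is in place, part $(ii)$ and the bijectivity of $\lef_k$ follow from $\mathfrak{sl}(2)$-representation theory and routine manipulations with the operators $\epsilon_\omega,\epsilon_\eta,i_U,i_V$.
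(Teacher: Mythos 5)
Your route is genuinely different from the paper's: the paper never builds a $\langle U,V\rangle$-invariant model of $\Omega^\bullet(M)$, but instead works with the $U$-basic cohomology, proving first a $U$-basic Hard Lefschetz theorem (via the Gysin-type long exact sequences relating $H^\bullet(M)$, $H^\bullet_B(M,U)$, $H^\bullet_B(M,\langle U,V\rangle)$, the splitting $H^k(M)\cong H^k_B(M,U)\oplus H^{k-1}_B(M,U)$ coming from the parallel field $U$, and El Kacimi's transverse Hard Lefschetz theorem), and then an equivalence ``Lefschetz $\Leftrightarrow$ $U$-basic Lefschetz''. Your torus-averaging model $\mathcal{C}^\bullet=A_B^\bullet\oplus\omega\wedge A_B^\bullet\oplus\eta\wedge A_B^\bullet\oplus\omega\wedge\eta\wedge A_B^\bullet$ is a legitimate alternative and, for producing the representatives in $(i)$ and checking the formula on them, your computations are essentially correct (up to the sign convention: the paper's $L$ is $\epsilon_{d\eta}$, not $\epsilon_{-d\eta}$).

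However, there is a genuine gap at the decisive point. Statement $(ii)$ quantifies over \emph{every} $\gamma'\in[\gamma]$ satisfying $(i)$, and the conditions in $(i)$ do not include $\mathcal{L}_V\gamma'=0$: an admissible representative is $U$-invariant with $i_V\gamma'=0$, but need not be $V$-invariant, hence need not lie in your $T$-invariant model, and its components $i_U(\omega\wedge\gamma')$, $i_U\gamma'$ are only $U$-basic, not $\langle U,V\rangle$-basic. Your verification of $(ii)$, and of the independence of the class, covers only model representatives $\alpha+\omega\wedge\beta$ with $\alpha,\beta\in A_B^\bullet$; the phrase ``one checks its class depends only on $[\gamma]$'' is exactly the nontrivial content (that the Lefschetz relation is the graph of a map) and is not supplied by your argument. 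The paper closes this by defining the candidate inverse $T_k=[\mathrm{Id}]\circ(\lef_k^{UV})^{-1}\circ[i_V]$ intrinsically from the Gysin diagram and then checking that $T_k$ applied to the formula's output returns the original class for \emph{every} admissible representative; you would need an analogue, e.g.\ an averaging argument over the closure of the $V$-flow carried out inside the $U$-basic complex (all operators in play commute with it), or, equivalently, a $U$-basic Hard Lefschetz statement proved first. A second, smaller gap: your two-column spectral sequence only yields the conditions of $(i)$ at the level of cohomology classes, whereas $(i)$ demands the on-the-nose identities $L^{n-k+1}\epsilon_\omega\gamma'=0$ and $L^{n-k+2}\gamma'=0$ at the level of forms; for this one needs transverse Hard Lefschetz realized on basic-harmonic forms, i.e.\ the fact that $\epsilon_{d\eta}$ commutes with the $\langle U,V\rangle$-basic Laplacian (El Kacimi, Proposition 3.4.5), which your appeal to ``the usual primitive decomposition'' leaves implicit.
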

In this paper we write $\eps_\beta$ for the operator of the exterior multiplication by
a differential form $\beta$, and $L$ is defined to be $\eps_{d\eta}$. 

The map $\lef_k: H^k(M) \longrightarrow H^{2n+2-k}(M)$, $0 \leq k \leq n$, in
Theorem~\ref{main-theorem} will be called the \emph{Lefschetz isomorphism in degree
$k$} for the compact Vaisman manifold $M$.

In order to prove Theorem \ref{main-theorem}, we will first use some results which concern the basic cohomologies
of a compact oriented Riemannian manifold with respect to some Riemannian foliations.  In fact, these results on basic cohomologies allow us to prove the $U$-basic Hard Lefschetz theorem below for a compact Vaisman manifold (this theorem is essential in the proof of Theorem  \ref{main-theorem}).

\begin{theorem}\label{basic-theorem}
Let $M$ be a compact Vaisman manifold of dimension $2n+2$ with Lee $1$-form
$\omega$, anti-Lee $1$-form $\eta$, Lee vector field $U$, and anti-Lee vector
field $V$.
Denote by $H_B^*(M,U)$  the basic cohomology of $M$ with respect to  $U$. Then for each
integer $k$ between $0$ and $n$,
there exists an isomorphism
\[
\lef_k^{U}: H^{k}_{B}(M,U) \longrightarrow H^{2n+1-k}_{B}(M,U)
\]
 which can be computed by using the following properties:
\begin{enumerate}[$(i)$]
\item
for every $[\beta]_{U} \in H^{k}_{B}(M,U)$, there is $\beta' \in [\beta]_{U}$ such that
\begin{equation}\label{basic-conditions}
 i_V\beta' = 0, \; \; L^{n-k+1}\beta' = 0;
\end{equation}
\item
if $\beta' \in [\beta]_{U}$ satisfies the conditions in $(i)$ then
\[
\lef_{k}^{U}[\beta]_{U} = [\epsilon_{\eta}L^{n-k}\beta']_{U}.
\]
\end{enumerate}
\end{theorem}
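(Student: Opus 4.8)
The plan is to lift the statement to the basic cohomology of two foliations naturally carried by the Vaisman structure and to reduce it to transverse Kähler geometry, exactly as the Hard Lefschetz theorem for a compact Sasakian manifold in \cite{CaNiYu} is reduced to the transverse Kähler geometry of its Reeb foliation. Since $\omega$ is unitary and parallel, $U$ is a unit Killing vector field with geodesic orbits, so the Lee foliation $\fol_U$ (of rank $1$) and the canonical foliation $\fol_c$ generated by $U$ and $V$ (of rank $2$) are Riemannian foliations with totally geodesic, hence minimal, leaves on the compact manifold $M$; thus both are taut. By the results on basic cohomology of taut Riemannian foliations recalled above, $H_B^*(M,U)$ and $H_B^*(M,\fol_c)$ are finite dimensional, are represented by basic harmonic forms, and satisfy transverse Poincaré duality; in particular, the transverse dimension of $\fol_U$ being $2n+1$, we get $\dim H_B^k(M,U)=\dim H_B^{2n+1-k}(M,U)$ for all $k$.

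From the structure equations of a Vaisman manifold I would extract the following facts: $\eta$ is $\fol_U$--basic, with $i_U\eta=0$ and $\lie_U\eta=0$; $i_V\eta$ equals $1$ (as $V$ is the metric dual of the unit form $\eta$); and $d\eta$ is $\fol_c$--basic and restricts on the transverse bundle of $\fol_c$ to a nonzero constant multiple of the transverse Kähler form. Hence $\fol_c$ is a taut transversely Kähler foliation of transverse dimension $2n$, and, writing $L=\eps_{d\eta}$, its basic cohomology carries the full transverse Kähler package: $L^{n-k}\colon H_B^k(M,\fol_c)\to H_B^{2n-k}(M,\fol_c)$ is an isomorphism for $0\le k\le n$, and there is a transverse Lefschetz decomposition into primitive subspaces $P_B^m(M,\fol_c)=\ker\bigl(L^{n-m+1}\colon H_B^m(M,\fol_c)\to H_B^{2n-m+2}(M,\fol_c)\bigr)$, $0\le m\le n$.

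Now $\eta$ plays, inside the $\fol_U$--basic complex, the role of a transverse connection form for the direction $V$, with curvature $d\eta$. Averaging over the closure of the isometric flow of the Killing field $V$ (a torus) replaces $\Omega_B^*(M,U)$ by the quasi-isomorphic subcomplex of $V$--invariant $\fol_U$--basic forms, which splits as $\Omega_B^*(M,\fol_c)\oplus\eta\wedge\Omega_B^{*-1}(M,\fol_c)$; the resulting short exact sequence of complexes gives a Gysin-type long exact sequence
\[
\cdots\xrightarrow{\,L\,}H_B^{k}(M,\fol_c)\xrightarrow{\,\iota\,}H_B^{k}(M,U)\xrightarrow{\,i_V\,}H_B^{k-1}(M,\fol_c)\xrightarrow{\,L\,}H_B^{k+1}(M,\fol_c)\xrightarrow{\,\iota\,}\cdots,
\]
whose connecting homomorphism is cup product with $[d\eta]$, i.e. $L$. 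For $0\le k\le n$, transverse Hard Lefschetz makes $L$ injective on $H_B^{k-1}(M,\fol_c)$, while the transverse Lefschetz decomposition identifies $\operatorname{coker}\bigl(L\colon H_B^{k-2}(M,\fol_c)\to H_B^{k}(M,\fol_c)\bigr)$ with $P_B^k(M,\fol_c)$ and $\ker\bigl(L\colon H_B^{2n-k}(M,\fol_c)\to H_B^{2n-k+2}(M,\fol_c)\bigr)$ with $L^{n-k}P_B^k(M,\fol_c)$; hence $\iota$ induces isomorphisms $P_B^k(M,\fol_c)\cong H_B^k(M,U)$ and, via $(i_V)^{-1}$, $L^{n-k}P_B^k(M,\fol_c)\cong H_B^{2n+1-k}(M,U)$, and composing with $L^{n-k}$ produces the desired isomorphism $\lef_k^U$. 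Unwinding these identifications computes $\lef_k^U$ as in the statement: for $[\beta]_U\in H_B^k(M,U)$ let $\beta'$ be the $\fol_c$--harmonic representative of the primitive class $\iota^{-1}[\beta]_U\in P_B^k(M,\fol_c)$; then $\beta'\in[\beta]_U$, it is $\fol_U$--basic with $i_V\beta'=0$ (being $\fol_c$--basic) and $L^{n-k+1}\beta'=0$ (being a transversely primitive form), so it satisfies \eqref{basic-conditions}; the form $\eps_\eta L^{n-k}\beta'=\eta\wedge(d\eta)^{n-k}\wedge\beta'$ is $\fol_U$--basic and closed, because $d\bigl(\eta\wedge(d\eta)^{n-k}\wedge\beta'\bigr)=L^{n-k+1}\beta'-\eta\wedge(d\eta)^{n-k}\wedge d\beta'=0$; and $\lef_k^U[\beta]_U=[\eps_\eta L^{n-k}\beta']_U$, which, being this canonical composite, is independent of the choice of $\beta'$ subject to \eqref{basic-conditions}.

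I expect the main obstacle to be the construction and exploitation of this Gysin long exact sequence together with the concluding bookkeeping: making the short exact sequence of complexes rigorous for the basic complex of the possibly non-closed Lee foliation, justifying the $V$--averaging, identifying the connecting map with $\eps_{d\eta}$, and then matching the abstract isomorphism with the explicit recipe $(i)$--$(ii)$ and proving its independence of the chosen representative. All the genuinely analytic input --- transverse Poincaré duality for $\fol_U$ and transverse Hard Lefschetz with the Lefschetz decomposition for $\fol_c$ --- is already packaged in the results on basic cohomology of taut Riemannian foliations recalled beforehand, so the remainder is essentially a transverse transcription of the Sasakian argument of \cite{CaNiYu}.
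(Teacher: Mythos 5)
Your proposal is correct in substance and reaches the theorem through the same two essential inputs as the paper --- a Gysin-type long exact sequence linking $H^*_B(M,\left\langle U,V\right\rangle)$ and $H^*_B(M,U)$, and El Kacimi-Alaoui's transverse Hard Lefschetz theorem for the transversely K\"ahler foliation $\left\langle U,V\right\rangle$ --- but it arrives at them by a genuinely different route. The paper does not build the Gysin sequence inside the $U$-basic complex: it starts from Tondeur's long exact sequence for the unit Killing field $V$, relating $H^*_B(M,V)$ and $H^*(M)$, and then splits off the parallel direction $U$ using its Theorem \ref{basic-cohomologies} and Corollary \ref{basic-cohomology-DeRham-cohomology} (the isomorphism $[(\id,\eps_\omega)]\colon H^k_B\oplus H^{k-1}_B\to H^k_B$, proved by basic Hodge theory); this is exactly where the hypotheses ``$U$ unitary and parallel'' are consumed, and it is the analogue of the $V$-averaging and $\eta$-splitting step that you construct by hand. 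Having the sequence, the paper then proves bijectivity by exhibiting the inverse map $T_k=[\id]\circ(\lef_k^{UV})^{-1}\circ[i_V]$ and chasing the two zipped short exact sequences, whereas you identify $H^k_B(M,U)$ with the transverse primitive cohomology $P^k_B$ and $H^{2n+1-k}_B(M,U)$ with $L^{n-k}P^k_B$ via $\iota$ and $[i_V]$; your version is closer to the Sasakian argument of \cite{CaNiYu} and is conceptually cleaner, at the cost of invoking the transverse Lefschetz decomposition (which is formal from El Kacimi's isomorphisms) and of having to justify that the inclusion of $V$-invariant $U$-basic forms is a quasi-isomorphism --- that averaging argument is true but is real work, precisely the work the paper outsources to Tondeur's theorem and its Section \ref{proof12}. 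Both proofs obtain the representative in $(i)$ the same way, from $\left\langle U,V\right\rangle$-basic harmonic forms and the fact that $\eps_{d\eta}$ commutes with the basic Laplacian.

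One point you should tighten: statement $(ii)$ must hold for \emph{every} $\beta'\in[\beta]_U$ satisfying \eqref{basic-conditions}, not only for your chosen harmonic primitive representative, and ``being a canonical composite'' does not by itself give this. The missing argument is short: such a $\beta'$ is closed with $i_V\beta'=0$, hence $\lie_V\beta'=0$, so it is $\left\langle U,V\right\rangle$-basic; writing $[\beta']_{UV}=p+[\eps_{d\eta}]c$ with $p$ primitive and $c\in H^{k-2}_B(M,\left\langle U,V\right\rangle)$, the relation $L^{n-k+1}\beta'=0$ gives $[\eps_{d\eta}]^{n-k+2}c=0$, and transverse Hard Lefschetz in degree $k-2$ forces $c=0$; then applying $[i_V]$ (injective in degree $2n+1-k$) to $[\eps_\eta L^{n-k}\beta']_U$ identifies it with $L^{n-k}\iota^{-1}[\beta]_U=\lef_k^U[\beta]_U$. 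The paper handles the same issue by simply computing $T_k[\eps_\eta L^{n-k}\beta']_U=[\beta]_U$ for an arbitrary admissible $\beta'$ and invoking injectivity of $T_k$; either fix completes your argument.
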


The map $ \lef_k^U: H_B^k(M,U) \longrightarrow H^{2n+1-k}_B(M,U)$, for $0 \leq k \leq n$, will be called the \emph{$U$-basic Lefschetz isomorphism in degree $k$} associated with the compact Vaisman manifold~$M$.


\begin{remark}
As we mentioned in Section \ref{Antecedents-motivation}, a compact Vaisman manifold $M$ of dimension $2n+2$ is diffeomorphic to the mapping torus of a compact Sasakian manifold $S$ of dimension $2n+1$ (see \cite{OrVe2}).
Note that in general the original Vaisman structure  differs from the one obtained from the Sasakian structure by the mapping torus construction. One can obtain  a Hard Lefschetz isomorphism for the de Rham cohomology of $M$ using the results in \cite{CaNiYu} for this new Vaisman structure. However, this isomorphism is not given in terms of the original Vaisman structure on $M$ (as in our Theorem \ref{main-theorem}).
For this reason, our approach in this paper is to use some results on the basic cohomology associated with the Lee vector field $U$ of $M$.
\end{remark}

For a Vaisman manifold $M$ of dimension $2n+2$,
the couple $(\omega,\eta)$ of the Lee and anti-Lee $1$-forms
defines a locally conformal symplectic (l.c.s.)  structure of the first kind
with anti-Lee vector field $V$ and infinitesimal automorphism $U$ (see \cite{Va}
and Section \ref{Compact-Sasakian-Vaisman} for the definition of an l.c.s. structure of the first kind). Note that the definition of the Hard Lefschetz and the basic Hard Lefschetz isomorphism associated with $M$ only depends on the l.c.s. structure of the first kind.
Hence, both isomorphisms provide obstructions for an l.c.s.  manifold of the first kind to admit Vaisman structures.

Now, let $M$ be a compact manifold of dimension $2n+2$ endowed with an l.c.s. structure of the first kind $(\omega, \eta)$. Suppose that $U$ and $V$ are the anti-Lee and Lee vector field, respectively, on $M$.

Then, the previous results suggest us to introduce the following \emph{Lefschetz relation}  between the cohomology groups $H^k(M)$ and $H^{2n+2-k}(M)$ , for $0\leq k \leq n$,

\begin{multline*}
	R_{\lef_k} =
	\left\{ ([\gamma], [\epsilon_{\eta}L^{n-k}(Li_{U}\gamma -
\epsilon_{\omega}\gamma)]) \right|
\gamma \in \Omega^k(M), \; d\gamma = 0, \; {\mathcal L}_{U}\gamma = 0, \;
i_V\gamma = 0,\\
\left. L^{n-k+2}\gamma = 0, \; L^{n-k+1}\epsilon_{\omega}\gamma = 0\right\}.
\end{multline*}

Similarly, we define the \emph{$U$-basic Lefschetz relation} between the basic
cohomology groups $H^k_B(M, U)$ and $H^{2n+1-k}_B(M, U)$, for $0\le k\le n$, by
\begin{equation*}
	R_{\lef_k}^B = \left\{ { ([\beta]_{U}, [\epsilon_{\eta}L^{n-k}\beta]_{U})} \;
	\middle| \; \beta
	\in \Omega^k_{B}(M, U), \; d\beta = 0, \; i_V\beta = 0, \;  L^{n-k+1}\beta
	= 0\right\}.
\end{equation*}
	An l.c.s. structure on $M$ of the first kind is said to be:
\begin{itemize}
\item[-]
	\emph{Lefschetz} if, for every $0\leq k \leq n$, the relation
	$R_{\lef_k}$ is the graph of an isomorphism $\lef_k: H^k(M) \longrightarrow
	H^{2n+2-k}(M)$;
 \item[-]
	 \emph{Basic Lefschetz} if, for every $0\leq k \leq n$, the relation $R_{\lef_k}^B$ is the graph of an isomorphism $\lef_k^U: H^k_B(M, U) \longrightarrow H^{2n+1-k}_{B}(M, U)$.
\end{itemize}
It is not clear what is the relation between the Lefschetz property and the basic
Lefschetz property in general.
However, we may prove the following result.
\begin{theorem}\label{basic-Hard-Lef-property}
Let $M$ be a compact manifold of dimension $2n+2$ endowed with an l.c.s. structure of the first kind $(\omega, \eta)$ such that the Lee vector field $U$ is unitary and parallel with respect to a Riemannian metric $g$ on $M$ and
\[
\omega(X) = g(X, U), \; \; \mbox{ for } X \in {\mathfrak X}(M).
\]
Then:
\begin{enumerate}[$(1)$]
\item
The structure $(\omega, \eta)$ is Lefschetz if and only if it is $U$-basic Lefschetz.
\item
If the structure $(\omega, \eta)$ is Lefschetz (or, equivalently, $U$-basic Lefschetz),
then for each $1\leq k \leq n$ there exists a non-degenerate bilinear form
\begin{gather*}
\psi: H_{B}^{k}(M, U) \times H_{B}^{k}(M, U) \longrightarrow \R \\
\psi([\beta]_{B}, [\beta']_{B})= \int_{M} [\omega]\cup\lef_{k}^U [\beta]\cup[\beta']
\end{gather*}
which is skew-symmetric for odd $k$ and symmetric for even $k$. As a consequence,
\begin{equation}\label{Betti-numbers}
b_k(M) - b_{k-1}(M) \mbox{ is even if } k \mbox{ is odd and } 1 \leq k \leq n,
\end{equation}
where $b_r(M)$ is the $r$\mbox{\rm th} Betti number of $M$.
\end{enumerate}
\end{theorem}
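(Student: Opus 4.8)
The plan is to exploit the hypothesis that $U$ is unitary and parallel in order to decompose the de Rham complex of $M$ into the $U$-basic complex and its "$\eps_\omega$-shift". Since $U$ is parallel, $\omega = g(\cdot,U)$ is a parallel (hence closed and coclosed) $1$-form, and every $k$-form $\gamma$ splits uniquely as $\gamma = \gamma_1 + \omega\wedge\gamma_2$ with $i_U\gamma_1 = i_U\gamma_2 = 0$; moreover one checks that when $\lie_U\gamma = 0$ both $\gamma_1$ and $\gamma_2$ are $U$-basic. The first step is to record this decomposition at the level of cohomology: I would show that the natural map gives
\[
H^k(M) \;\cong\; H^k_B(M,U)\,\oplus\,[\omega]\cup H^{k-1}_B(M,U),
\]
using that $U$ generates a (possibly non-compact, but with compact closure) isometric flow so that harmonic representatives may be chosen $U$-invariant, and that a $U$-invariant form is cohomologous to a $U$-basic one (average along the closure of the flow, or invoke the Riemannian-foliation results cited before Theorem~\ref{basic-theorem}). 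The same decomposition applies in degree $2n+2-k$, giving $H^{2n+2-k}(M)\cong H^{2n+1-k}_B(M,U)\oplus[\omega]\cup H^{2n-k}_B(M,U)$; note $H^j_B(M,U)$ vanishes for $j>2n+1$ since the basic complex has top degree $2n+1$.

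The second step is the bookkeeping that identifies $R_{\lef_k}$ with $R^B_{\lef_k}$ under these isomorphisms. Given $[\gamma]\in H^k(M)$, write $\gamma' = \gamma_1 + \omega\wedge\gamma_2$ for a representative satisfying the conditions in $R_{\lef_k}$; then $i_U\gamma' $ has basic part $\gamma_2$, so $Li_U\gamma' - \eps_\omega\gamma' = L^{?}$-terms that, after applying $\eps_\eta L^{n-k}$, land in the two summands of $H^{2n+2-k}(M)$ according to the two basic classes $[\gamma_1]_U$ and $[\gamma_2]_U$. The point is that the conditions $L^{n-k+2}\gamma' = 0$ and $L^{n-k+1}\eps_\omega\gamma' = 0$ translate precisely into $L^{n-(k-1)+1}\gamma_1 = 0$ and $L^{n-k+1}\gamma_2 = 0$ (here one uses $\eps_\omega$ and $L = \eps_{d\eta}$ together with $d\omega = 0$, $d\eta$ basic, $i_V\omega = 0$), and the conditions $\lie_U\gamma'=0$, $i_V\gamma'=0$, $d\gamma'=0$ become exactly the defining conditions of $R^B_{\lef_k}$ for $\gamma_1$ and $\gamma_2$. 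Consequently $R_{\lef_k}$ is the graph of an isomorphism in degree $k$ if and only if $R^B_{\lef_{k}}$ and $R^B_{\lef_{k-1}}$ are graphs of isomorphisms in degrees $k$ and $k-1$; running $k$ from $0$ to $n$ and using $R^B_{\lef_0}$ as the base case yields part $(1)$ in both directions. I expect this translation — getting every power of $L$ and every $\eps_\omega$, $\eps_\eta$ to match up on the nose, with the correct index shifts — to be the main obstacle, essentially a careful but unenlightening computation with the algebra of $\eps_\omega$, $\eps_\eta$, $L$, $i_U$, $i_V$.

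The third step proves part $(2)$. Assuming the structure is $U$-basic Lefschetz, define $\psi([\beta]_B,[\beta']_B) = \int_M [\omega]\cup\lef^U_k[\beta]\cup[\beta']$; this is well defined since $[\omega]\cup\lef^U_k[\beta]\in H^{2n+2-k}(M)$ pairs with $[\beta']\in H^k_B(M,U)\hookrightarrow H^k(M)$ into $H^{2n+2}(M)\cong\R$ (orientation coming from $\omega\wedge\eta\wedge(d\eta)^n$, which is nowhere zero since $(\omega,\eta)$ is l.c.s. of the first kind). Nondegeneracy is immediate from the fact that $\lef^U_k$ is an isomorphism and that cup product with $[\omega]$ realizes Poincaré duality between the two basic summands (from the degree-$k$ decomposition above, this is exactly the statement that $H^k_B(M,U)\times H^{2n+1-k}_B(M,U)\to\R$, $([\beta],[\theta])\mapsto\int_M\omega\wedge\beta\wedge\theta$, is a perfect pairing). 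For the (anti)symmetry: unwinding the definition, $\psi([\beta]_B,[\beta']_B) = \int_M \omega\wedge\eta\wedge (d\eta)^{n-k}\wedge\beta\wedge\beta'$ up to a sign and a multiple depending only on the parity of $k$ arising from commuting the degree-$k$ form $\beta$ past the degree-$k$ form $\beta'$, which contributes $(-1)^{k^2} = (-1)^k$; hence $\psi$ is symmetric for $k$ even and skew for $k$ odd. Finally, when $k$ is odd and $1\le k\le n$, a nondegenerate skew form forces $\dim H^k_B(M,U)$ to be even; combined with $b_k(M) = \dim H^k_B(M,U) + \dim H^{k-1}_B(M,U)$ and the analogous identity for $b_{k-1}(M)$ (note the two consecutive applications telescope: $b_k(M) - b_{k-1}(M) = \dim H^k_B(M,U) - \dim H^{k-2}_B(M,U)$, and both of those basic Betti numbers are even by the skew-symmetric pairing in odd degree), we get that $b_k(M) - b_{k-1}(M)$ is even, which is \eqref{Betti-numbers}.
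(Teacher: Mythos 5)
Your proposal follows essentially the same route as the paper: the splitting $H^m(M)\cong H^m_B(M,U)\oplus[\omega]\cup H^{m-1}_B(M,U)$ of Corollary~\ref{basic-cohomology-DeRham-cohomology}, the identification of the Lefschetz relation in degree $k$ with the direct sum (after a swap of factors) of the basic Lefschetz relations in degrees $k$ and $k-1$, and basic Poincar\'e duality together with the sign $(-1)^{k^2}=(-1)^k$ for part $(2)$, so the strategy is sound and matches the paper's. Two index slips should be fixed when you carry out the computation you defer: the decomposition of the target must read $H^{2n+2-k}(M)\cong H^{2n+2-k}_B(M,U)\oplus[\omega]\cup H^{2n+1-k}_B(M,U)$ (with $\lef_{k-1}^U$ landing in the first summand and $\lef_k^U$ in the second), and the conditions translate as $L^{n-k+1}\gamma_1=0$ for the degree-$k$ basic part $\gamma_1$ and $L^{n-k+2}\gamma_2=0$ for the degree-$(k-1)$ part $\gamma_2$, i.e.\ the opposite assignment to the one you wrote.
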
 We remark that relations in (\ref{Betti-numbers}) are well-known properties of the Betti numbers of a compact Vaisman manifold of dimension $2n+2$ (see \cite{DrOr,Va1}).

\subsection{Organization of the paper}
In Section~\ref{Compact-Sasakian-Vaisman} we review some known results on locally  conformal symplectic, contact and Vaisman manifolds.  In Section~\ref{proof12}, we will discuss several results on the basic cohomology of some transversely oriented Riemannian foliations on compact Riemannian manifolds. In Sections~\ref{basic-proof} and \ref{proof14}, we will prove Theorems~\ref{basic-theorem}
and~\ref{basic-Hard-Lef-property}, respectively. As a consequence Theorem~\ref{main-theorem}
is also proved in Section~\ref{proof14}. Finally, in Section \ref{sec:examples}, we give several examples of compact l.c.s. manifolds of the fist kind which do not admit compatible Vaisman metrics. Some of these examples satisfy the Lefschetz property and the basic Lefschetz property and others not.

\bigskip

All manifolds considered in this paper will be assumed to be smooth and connected.
For wedge product, exterior derivative and interior product we use the conventions as in Goldberg's book \cite{Go}.

\section{ Locally conformal symplectic, contact and Vaisman manifolds}\label{Compact-Sasakian-Vaisman}
In this section we will review the definition of a locally conformal symplectic (l.c.s.) structure of the first kind,  of a contact structure and of a Vaisman manifold. More details can be found in  \cite{BaMa,DrOr, Ornea05, Va}.

An \emph{l.c.s. structure of the first kind} on a manifold $M$ of dimension $2n+2$ is a couple $(\omega, \eta)$ of $1$-forms such that:
\begin{enumerate}[$(i)$]
\item
$\omega$ is closed;
\item
the rank of $d\eta$ is $2n$ and $\omega \wedge \eta \wedge (d\eta)^{n}$ is a volume form.
\end{enumerate}
The form $\omega$ is called the \emph{Lee $1$-form} while $\eta$ is said to be the \emph{anti-Lee} \emph{$1$-form}.

If $(\omega, \eta)$ is an l.c.s. structure of the first kind on $M$ then
there exists a unique vector field $V$, \emph{the anti-Lee vector field} of $M$, which is characterized by the following conditions
\[
\omega(V) = 0, \; \; \eta(V) = 1, \; \; i_Vd\eta = 0.
\]
Moreover,
there exists a unique vector field $U$, \emph{the Lee vector field} of $M$, which is characterized by the following conditions
\[
\omega(U) = 1, \; \; \eta(U) = 0, \; \; i_Ud\eta = 0.
\]
\begin{remark}
If $(\omega, \eta)$ is an l.c.s. structure of the first kind then
the $2$-form
\[
\Omega := d\eta + \eta \wedge \omega
\]
is non-degenerate and
\begin{equation}\label{lck}
d\Omega = \omega \wedge \Omega.
\end{equation}
Moreover, the Lee vector field $U$ satisfies the condition
\(
{\mathcal L}_{U}\Omega = 0.
\)
In other words, $\Omega$ is an l.c.s. structure  of the first kind in the sense of Vaisman \cite{Va} with Lee $1$-form $\omega$ and infinitesimal automorphism $U$.
Conversely, if $\Omega$ is an l.c.s. structure of the first kind with Lee $1$-form $\omega$ and infinitesimal automorphism $U$ then the rank of $d\eta$ is $2n$ and $\omega \wedge \eta \wedge (d\eta)^n$ is a volume form, with $\eta$ the $1$-form on $M$ given by
\[
\eta = -i_U\Omega.
\]
\end{remark}

The typical example of a compact l.c.s. manifold of the first kind is the product of a compact contact manifold with the circle $S^1$. We recall that a $1$-form $\eta$ on a manifold $N$ of dimension $2n+1$ is said to be a contact structure if $\eta \wedge (d\eta)^n$ is a volume form. In such a case, there exists a unique vector field $\xi$ on $N$,   called the Reeb vector field, which is characterized by the following conditions
\[
i_{\xi}\eta = 1, \; \; \; i_{\xi}d\eta = 0.
\]

If $\eta$ is a contact structure on $N$ then the product manifold $M = N \times S^1$ admits a l.c.s. structure of the first kind which is given by $(pr_2^*\theta, pr_1^*\eta)$, where $\theta$ is the volume form of length 1 on $S^1$ and $pr_1: M \to N$ and $pr_2: M \to S^1$ are the canonical projections. The Lee and anti-Lee vector fields on $M$ are the Reeb vector field $\xi$ and $E$, respectively, with $E$ the canonical vector field on $S^{1}$.

A \emph{Vaisman manifold} is an l.c.s. manifold of the first kind $(M, \omega, \eta)$ which carries a Riemannian metric $g$ such that:
\begin{enumerate}
\item
the tensor field $J$ of type $(1, 1)$, given by
\begin{equation}\label{compatibility}
g(X, JY) = \Omega(X, Y), \; \; \mbox {for } X, Y \in {\mathfrak X}(M),
\end{equation}
is a complex structure which is compatible with $g$, that is,
\begin{equation}\label{hermitian}
g(JX, JY) = g(X, Y);
\end{equation}
\item
the Lee $1$-form $\omega$ is parallel with respect to $g$.
\end{enumerate}
\begin{remark}\label{Lee-unitary}
If $\omega$ is the Lee $1$-form of a Vaisman manifold $M$ then $\|\omega\|$ is a  positive constant.
We will assume, without  loss of  generality, that $\omega$ is unitary which implies that
the anti-Lee and Lee vector fields $U$ and $V$  are  also unitary.
\end{remark}
If $(M, J, g)$ is a Vaisman manifold one may prove that $U$ is parallel (and, thus, Killing), $V$ is Killing and
\[
[U, V] = 0, \; \; {\mathcal L}_{U}J = 0, \; \; {\mathcal L}_VJ = 0.
\]
 So, the foliation of rank $2$ which is generated by $U$ and $V$ is transversely K\"ahler.

 As we expected, the existence of a Vaisman structure on a compact manifold implies several topological restrictions. In fact, from (\ref{lck}), (\ref{compatibility}) and (\ref{hermitian}), it follows that a Vaisman manifold is locally conformal K\"ahler (l.c.K.) and the existence of a l.c.K. structure on a compact manifold $M$ also implies that the topology of $M$ must satisfy some conditions (see \cite{OrneaVerbitsky11}).

\section{Some results on the basic cohomology of Riemannian foliations}
\label{proof12}

Let $\fol$ be a transversely oriented Riemannian foliation  on a compact Riemannian manifold $P$ with a bundle-like metric $g$.
We define the basic de Rham complex with respect to $\fol$ by
\[
\Omega^{k}_{B}(P,\fol):=\left\{\alpha \in \Omega^{k}(P) \;\middle|\; i_X\alpha=0,\; \lie_X\alpha=0,  \; \forall X\in\Gamma (T\fol)  \right\}.
\]
 The exterior derivative $d$ restricts to a cohomology operator $d_\fol$ on basic forms. The corresponding basic cohomology will be denoted by $H^{k}_{B}(P,\fol)$.

Recall that a vector field $W$ is said to be \emph{foliated} with respect to the foliation $\fol$ if $[W,\Gamma (T\fol)]\subset \Gamma (T\fol)$.
Let $(P,\fol)$ be a foliated manifold and $W$ a parallel unitary foliated vector field on $P$ which is  orthogonal to $\fol$.
Then, we can enlarge the foliation $\fol$ by adding the vector field $W$, defining a new foliation
$\folw:=\langle\fol,  W \rangle$. 
The following theorem relates the corresponding basic cohomologies $H^*_{B}(P, \fol)$ and $H^*_{B}(P,\folw)$.


\begin{theorem}
\label{basic-cohomologies}
Let $\fol$ be a transversely oriented Riemannian foliation on a compact oriented Riemannian manifold $(P,g)$ of dimension $p$
 and $W$ be a unitary and parallel foliated vector field on $P$ that is orthogonal to $\fol$.
 Denote by  $w$ the metric dual $1$-form of $W$ and let $\folw:=\langle\fol,  W \rangle$.
Then for each integer $k$ between $0$ and $p$, the map
\[
 [(Id, \epsilon_{w})]:  H^k_{B}(P,\folw) \oplus H^{k-1}_{B}(P,\folw) \longrightarrow H^{k}_{B}(P,\fol)
\]
defined by
\begin{equation}\label{Id-epsilon_w-k}
[(Id, \epsilon_{w})]([\beta]_{\folw}, [\beta']_{\folw}) = [\beta + w \wedge \beta']_{\fol}
\end{equation}
is an isomorphism.
\end{theorem}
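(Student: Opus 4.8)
The strategy is to work at the level of differential forms and build an explicit homotopy-type argument, exploiting the fact that $W$ is parallel and unitary. The key structural observation is that, because $W$ is parallel, the metric dual $1$-form $w$ is closed ($\cov w = 0 \Rightarrow dw = 0$) and coclosed, and moreover $i_W w = |W|^2 = 1$. Using that $W$ is foliated and orthogonal to $\fol$, one checks that every $\fol$-basic $k$-form $\alpha$ decomposes uniquely as $\alpha = \beta + w \wedge \beta'$ where $\beta := \alpha - w \wedge i_W\alpha$ satisfies $i_W\beta = 0$ and $\beta' := i_W\alpha$; the condition that $\alpha$ is $\fol$-basic together with $\lie_W = d\, i_W + i_W d$ and $\lie_W w = 0$ forces $\beta$ and $\beta'$ to be $\folw$-basic precisely when $\lie_W\alpha = 0$, which is automatic once we also track $d$. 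So, at the cochain level, $\Omega^k_B(P,\fol)$ splits (not as a complex, but as graded vector spaces) as $\Omega^k_B(P,\folw) \oplus w\wedge\Omega^{k-1}_B(P,\folw)$, and the point is to understand the differential in these coordinates.

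First I would compute $d(\beta + w\wedge\beta') = d\beta - w\wedge d\beta'$ for $\beta,\beta'$ that are $\folw$-basic, using $dw=0$. Since $d\beta$ and $d\beta'$ are again $\folw$-basic (the operator $d_{\folw}$ being the restriction of $d$), this shows the map $[(Id,\epsilon_w)]$ of the statement is a well-defined chain map from the mapping-cone-type complex $\big(\Omega^*_B(P,\folw)\oplus\Omega^{*-1}_B(P,\folw),\ (\beta,\beta')\mapsto(d\beta,-d\beta')\big)$ — which has the form of a direct sum of two copies of the $\folw$-basic complex — to $\big(\Omega^*_B(P,\fol),d\big)$, and in fact that this chain map is an isomorphism of complexes. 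Hence it induces an isomorphism on cohomology, and on cohomology the source is $H^k_B(P,\folw)\oplus H^{k-1}_B(P,\folw)$ with formula \eqref{Id-epsilon_w-k}. That is essentially the whole proof, modulo one genuinely nontrivial input.

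The main obstacle — and the step I would spend the most care on — is justifying that $\beta = \alpha - w\wedge i_W\alpha$ and $\beta' = i_W\alpha$ are $\folw$-\emph{basic} whenever $\alpha$ is $\fol$-basic, i.e. that adding $W$ to the foliation does not shrink the relevant space of forms in an uncontrolled way, and dually that every $\folw$-basic form is $\fol$-basic. The containment $\Omega^*_B(P,\folw)\subset\Omega^*_B(P,\fol)$ is immediate from the definitions since $T\fol\subset T\folw$. For the decomposition: $i_X\beta = i_X\alpha - i_X(w)\,i_W\alpha + w\wedge i_X i_W\alpha$; for $X\in\Gamma(T\fol)$ this vanishes because $i_X\alpha=0$, $i_X w = g(X,W)=0$ (orthogonality), and $i_Xi_W\alpha = -i_Wi_X\alpha = 0$; for $X=W$ one gets $i_W\beta = i_W\alpha - i_W\alpha + 0 = 0$. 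Similarly $i_W\beta' = i_Wi_W\alpha = 0$ and $i_X\beta'=-i_Wi_X\alpha=0$. The Lie-derivative conditions $\lie_X\beta=0$, $\lie_X\beta'=0$ for $X\in\Gamma(T\fol)$ follow from $\lie_X\alpha=0$, $\lie_Xw=0$ (as $w$ is the metric dual of the $\fol$-foliated field $W$ under a bundle-like metric, $\lie_X w=0$ for $X$ tangent to $\fol$ — this uses that $W$ is foliated and parallel) and the Cartan/Leibniz formulas; and $\lie_W\beta = i_Wd\beta + d i_W\beta = i_Wd\beta$, which one shows vanishes using $\lie_W\alpha = i_Wd\alpha + d i_W\alpha$ and $dw=0$, $\lie_W w=0$. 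One must also verify $\lie_W\alpha=0$ for $\fol$-basic $\alpha$ is \emph{not} assumed — rather, it is $\lie_W\beta=0$ and $\lie_W\beta'=0$ that must be extracted purely formally — so the clean way is: for $\fol$-basic $\alpha$ the form $\lie_W\alpha$ is again $\fol$-basic (since $[W,\Gamma(T\fol)]\subset\Gamma(T\fol)$), and one argues the splitting/chain-isomorphism statement on the full $\fol$-basic complex where $\lie_W$ acts, diagonalizing or using that $\lie_W$ is zero on cohomology because $W$ is Killing on a compact manifold (averaging, or noting $\lie_W = d i_W + i_W d$ commutes with the basic Laplacian $\lap_\fol$ and annihilates $\lap_\fol$-harmonic basic forms by a Bochner-type identity). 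Once this is in place, the identification of $\folw$-basic forms inside $\fol$-basic forms with $\ker(\lie_W|_{\Omega^*_B(P,\fol)})$ on cohomology, together with the algebraic splitting $\alpha = (\alpha - w\wedge i_W\alpha) + w\wedge i_W\alpha$, yields the stated isomorphism, with the formula \eqref{Id-epsilon_w-k} being exactly the reconstruction map $(\beta,\beta')\mapsto\beta+w\wedge\beta'$.
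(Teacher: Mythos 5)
Your overall architecture is the right one and, in its corrected form, coincides with the paper's: decompose an $\fol$-basic form $\alpha$ as $(\alpha - w\wedge i_W\alpha) + w\wedge i_W\alpha$ and observe that the two pieces are $\folw$-basic precisely when $\lie_W\alpha=0$. But there are two problems. First, your intermediate claim that $(Id,\eps_w)$ is an isomorphism \emph{of complexes} from $\Omega^*_B(P,\folw)\oplus\Omega^{*-1}_B(P,\folw)$ onto $\Omega^*_B(P,\fol)$ is false: the map is injective at the cochain level but not surjective, because a general $\fol$-basic form need not satisfy $\lie_W\alpha=0$ (take $\fol=0$, $P=T^2$, $W=\partial_x$, and a function depending on $x$). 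Its image is only the subcomplex $\Omega^*_B(P,\fol)^{\lie_W}$ of $\lie_W$-invariant basic forms. You notice this yourself in the last paragraph, but the repair you sketch is exactly where the content of the theorem lives and it is left unproved: one needs that the inclusion $\Omega^*_B(P,\fol)^{\lie_W}\hookrightarrow\Omega^*_B(P,\fol)$ is a quasi-isomorphism. The fact that $\lie_W=d\,i_W+i_W d$ induces the zero map on $H^*_B(P,\fol)$ does not by itself give this (for a general complex with an operator chain-homotopic to zero, the inclusion of its kernel need not be a quasi-isomorphism), and the statement that $\lie_W$ annihilates basic-harmonic forms only yields surjectivity of the induced map in cohomology. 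You never address injectivity, i.e.\ that an invariant basic form which is exact in $\Omega^*_B(P,\fol)$ admits an \emph{invariant} basic primitive.

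The paper devotes its entire first step to precisely this point, via basic Hodge theory: it proves $[\lie_W,\Pi_\fol]=0$ (using that $W$ is Killing and foliated), hence $[\lie_W,\delta_\fol]=0$ and $[\lie_W,\Delta_\fol]=0$, hence $[\lie_W,G_\fol]=0$ by a separate lemma on the basic Green operator. Injectivity then follows by exhibiting the primitive $\gamma=\delta_\fol G_\fol\beta$, which is automatically $\lie_W$-invariant, and surjectivity by showing that a basic-harmonic $\beta$ satisfies $\lie_W\beta=d\,\delta_\fol G_\fol\lie_W\beta=d\,G_\fol\lie_W\delta_\fol\beta=0$. Your alternative suggestion of averaging over the closure of the flow of $W$ in the isometry group could in principle deliver both halves at once, but you would then have to check that this compact group preserves $\Omega^*_B(P,\fol)$ and that the averaging homotopy stays within basic forms; as written it is a one-word gesture rather than an argument. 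Your remaining step --- that $(Id,\eps_w)$ identifies $\Omega^*_B(P,\folw)\oplus w\wedge\Omega^{*-1}_B(P,\folw)$ with $\Omega^*_B(P,\fol)^{\lie_W}$, with inverse $\alpha\mapsto(\alpha-w\wedge i_W\alpha,\,i_W\alpha)$ --- is correct and is essentially the paper's second step.
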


Considering as a special case $\fol$ to be the zero foliation on $P$, one can relate the de~Rham cohomology $H^{*}(P)$
to the basic cohomology $H^*_{B}(P,W)$ with respect to the vector field $W$.

\begin{corollary}
\label{basic-cohomology-DeRham-cohomology}
Let $W$ be a unitary and parallel vector field on a compact oriented Riemannian manifold $(P,g)$ of dimension $p$.
Denote by  $w$ the metric dual $1$-form of $W$.
Then for each integer $k$ between $0$ and $p$, the map
\[
 [(Id, \epsilon_{w})]:  H^k_{B}(P,W) \oplus H^{k-1}_{B}(P,W) \longrightarrow H^{k}(P)
\]
defined by
\begin{equation}\label{Id-epsilon_w-k2}
[(Id, \epsilon_{w})]([\beta]_{W}, [\beta']_{W}) = [\beta + w \wedge \beta']
\end{equation}
is an isomorphism.
\end{corollary}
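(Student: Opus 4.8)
The plan is to derive this as a direct specialization of Theorem~\ref{basic-cohomologies}. First I would check that the hypotheses of the theorem are met when $\fol$ is taken to be the zero foliation $\{0\}$ on $P$ (that is, the foliation whose only leaf-wise vector field is $0$, or equivalently the foliation by points). This foliation is trivially transversely oriented (the normal bundle is all of $TP$, which is oriented by hypothesis), and it is Riemannian with bundle-like metric $g$ since there are no leaves to impose compatibility conditions; its basic de Rham complex is the full complex $\Omega^*(P)$, so $H^*_B(P,\{0\}) = H^*(P)$. Any vector field is foliated with respect to $\{0\}$, and the orthogonality condition ``$W$ orthogonal to $\fol$'' is vacuous. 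Hence a unitary parallel $W$ on $P$ satisfies all the hypotheses of Theorem~\ref{basic-cohomologies} relative to $\fol=\{0\}$.

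Next I would identify the enlarged foliation $\folw = \langle \{0\}, W\rangle$ with the rank-one foliation generated by $W$ alone, so that $H^*_B(P,\folw) = H^*_B(P,W)$ in the notation of the corollary (recall $\Omega^k_B(P,W)$ consists of those $\alpha$ with $i_W\alpha = 0$ and $\lie_W\alpha = 0$). With these identifications, the map $[(\Id,\epsilon_w)]$ of Theorem~\ref{basic-cohomologies} becomes precisely the map
\[
[(\Id,\epsilon_w)]\colon H^k_B(P,W)\oplus H^{k-1}_B(P,W)\longrightarrow H^k(P),\qquad ([\beta]_W,[\beta']_W)\mapsto [\beta + w\wedge\beta'],
\]
which is the map~\eqref{Id-epsilon_w-k2} in the statement of the corollary. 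Theorem~\ref{basic-cohomologies} then asserts directly that this is an isomorphism for every $k$ between $0$ and $p$.

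There is essentially no obstacle here beyond bookkeeping; the only point that warrants a sentence of justification is that the zero foliation legitimately falls within the scope of Theorem~\ref{basic-cohomologies}, i.e.\ that ``transversely oriented Riemannian foliation with bundle-like metric'' is not implicitly assumed to be of positive rank and that $\Gamma(T\fol) = 0$ makes the basic complex the full de Rham complex. If the paper's conventions were to exclude the zero foliation, one would instead give a one-line direct argument: $w$ is closed (being parallel, hence harmonic, on a compact manifold, so $dw=0$ from $\lie_W$ and $i_W$ considerations — concretely $\nabla w = 0 \Rightarrow dw = 0$), every de Rham class has a representative $\alpha$ with $\lie_W\alpha=0$ by averaging along the flow of $W$ (the flow preserves $g$ since $W$ is Killing) and then the Hodge-theoretic decomposition $\alpha = \beta + w\wedge\beta'$ with $\beta,\beta'$ $W$-basic gives surjectivity, while injectivity follows by the same splitting applied to an exact form; but since Theorem~\ref{basic-cohomologies} is already available, invoking it with $\fol=\{0\}$ is the cleanest route.
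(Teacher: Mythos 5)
Your proposal is correct and matches the paper's own derivation: the corollary is obtained precisely by specializing Theorem~\ref{basic-cohomologies} to the zero foliation $\fol=\{0\}$, for which the basic complex is the full de Rham complex and $\folw=\langle W\rangle$. The extra remarks on why the zero foliation is admissible are sound but not needed beyond the bookkeeping you describe.
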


The rest of this section is devoted to prove Theorem \ref{basic-cohomologies}.
First we need to recall some preliminary results.

Denote by $\star$ the Hodge star isomorphism on $P$ and by $\delta$ the codifferential, defined for a $k$-form $\theta$ on $P$ by
\[
\delta \theta = (-1)^{pk + p +1} \star d \;\star \theta.
\]
Note that (see page 97 in \cite{Go})
\begin{equation}\label{star-star}
\star \; \star = (-1)^{k(p-k)} Id, \qquad \star \, \delta = (-1)^p d \star.
\end{equation}
Let $\{W_1, \dots, W_p\}$ be a local orthonormal basis of vector fields on $P$ and $\{w^1, \dots, w^p\}$ the corresponding dual basis of $1$-forms.
We have that
\begin{equation}
\label{diff-co-diff}
d \theta = \displaystyle \sum_{j=1}^{p} \epsilon_{w^j}\nabla_{W_{j}}\theta, \qquad \delta \theta = - \sum_{j=1}^{p} i_{W_{j}}\nabla_{W_{j}}\theta,
\end{equation}
where $\nabla$ is the Levi-Civita connection on $P$ and $\eps_{w^j}$ is the operator of the exterior multiplication by the $1$-form $w^j$.

Using (\ref{diff-co-diff}), the following result clearly holds.
\begin{lemma}
\label{parallel-closed-coclosed}
If $\theta$ is a parallel $k$-form on $P$, then $\theta$ is harmonic.
\end{lemma}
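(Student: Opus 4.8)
\textbf{Proof plan for Lemma~\ref{parallel-closed-coclosed}.}
The plan is to show directly that $d\theta = 0$ and $\delta\theta = 0$, from which harmonicity follows immediately since the Hodge Laplacian is $\lap = d\delta + \delta d$. The key input is the pair of formulas in~(\ref{diff-co-diff}), which express $d$ and $\delta$ as sums over a local orthonormal frame $\{W_1,\dots,W_p\}$ of the operators $\eps_{w^j}\nabla_{W_j}$ and $-i_{W_j}\nabla_{W_j}$ respectively. Since $\theta$ is parallel, $\nabla_X\theta = 0$ for every vector field $X$ on $P$; in particular $\nabla_{W_j}\theta = 0$ for each $j$ in the chosen frame. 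Substituting this into both formulas in~(\ref{diff-co-diff}) gives $d\theta = \sum_j \eps_{w^j}\nabla_{W_j}\theta = 0$ and $\delta\theta = -\sum_j i_{W_j}\nabla_{W_j}\theta = 0$. These identities are local, being stated in terms of a local orthonormal frame, but since such a frame exists in a neighbourhood of every point of $P$, the vanishing holds globally.

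Having established that $\theta$ is both closed and coclosed, I would conclude that $\lap\theta = (d\delta + \delta d)\theta = 0$, i.e. $\theta$ is harmonic, which is the assertion of the lemma. There is essentially no obstacle here: the argument is a one-line substitution once the frame formulas~(\ref{diff-co-diff}) are in hand, and the only point requiring a word of care is the passage from the local frame computation to a global conclusion, which is routine because the conclusions $d\theta = 0$ and $\delta\theta = 0$ are frame-independent statements that can be checked pointwise.
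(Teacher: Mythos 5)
Your argument is correct and is exactly the one the paper intends: the formulas in~(\ref{diff-co-diff}) combined with $\nabla_{W_j}\theta = 0$ give $d\theta = 0$ and $\delta\theta = 0$ at once, hence $\Delta\theta = 0$. The paper simply states that the lemma ``clearly holds'' from~(\ref{diff-co-diff}), so your write-up supplies the same proof in slightly more detail.
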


Now, one may define
the basic codifferential $\delta_\fol:\Omega^{k}_{B}(P,\fol)\to\Omega^{k-1}_{B}(P,\fol)$ as $\delta_\fol=\Pi_\fol\circ\delta$, where $\Pi_\fol: \Omega^k(P)\to \Omega^k_B(P, \fol)$ is the orthogonal projection on the space of basic forms. Then,  basic Hodge theory for the basic Laplacian $\Delta_\fol= d_\fol \delta_\fol +\delta_\fol d_\fol$ holds.  In fact, if $\Omega^k_{\Delta_\fol}(P)$ is the space of basic-harmonic $k$-forms, we have the orthogonal decomposition
\begin{align*}
\Omega^{k}_{B}(P,\fol)= \Delta_\fol(\Omega^{k}_{B}(P,\fol)) \oplus \Omega^k_{\Delta_\fol}(P) =
\Imm d_\fol \oplus \Imm \delta_\fol \oplus \Omega^k_{\Delta_\fol}(P),
\end{align*}
(see \cite{PaRi}). Thus, there exists a basic Green operator $G_\fol:\Omega^k_B(P, \fol)\to \Omega^k_{\Delta_\fol}(P)^{\perp} = \Delta_\fol(\Omega^k_B(P, \fol))$ which is characterized by the condition
\begin{equation}\label{eq:basichodge2}
\Delta_\fol G_\fol =\Id-\harm_\fol,
\end{equation}
where $\harm_\fol: \Omega^k_B(P, \fol) \to \Omega^k_{\Delta_\fol}(P)$ is the orthogonal projection. It is clear that $G_\fol\harm_\fol=\harm_\fol G_\fol=0$ and
\begin{equation}\label{eq:basichodge1}
G_\fol \Delta_\fol =\Id-\harm_\fol,
\end{equation}
(for more details, see \cite{PaRi}).

 Now, we prove that the following property of the standard Green operator also holds in the foliated case.
\begin{lemma}\label{basic-warner}
The basic Green operator $G_\fol$ commutes with any linear operator on $\Omega^{*}_{B}(P,\fol)$ which commutes with the basic Laplacian $\Delta_\fol$.
\end{lemma}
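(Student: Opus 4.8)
The plan is to mimic the classical argument for the ordinary Green operator (as in Warner's book), adapted to the basic Hodge decomposition recalled above. Suppose $T\colon \Omega^{*}_{B}(P,\fol)\to\Omega^{*}_{B}(P,\fol)$ is linear and commutes with $\Delta_\fol$. First I would observe that $T$ preserves the space $\Omega^{k}_{\Delta_\fol}(P)$ of basic-harmonic forms: if $\Delta_\fol\theta=0$ then $\Delta_\fol(T\theta)=T(\Delta_\fol\theta)=0$. Hence $T$ also preserves the orthogonal complement in the algebraic sense relevant here, namely $T$ maps $\Imm\Delta_\fol=\Delta_\fol(\Omega^{k}_{B}(P,\fol))$ into itself; indeed $T(\Delta_\fol\alpha)=\Delta_\fol(T\alpha)\in\Imm\Delta_\fol$. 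Consequently $T$ commutes with the harmonic projection $\harm_\fol$: on $\Omega^{k}_{\Delta_\fol}(P)$ both $\harm_\fol T$ and $T\harm_\fol$ equal $T$, while on $\Imm\Delta_\fol$ both vanish (the second because $T$ preserves $\Imm\Delta_\fol$ and $\harm_\fol$ kills it), and these two subspaces span $\Omega^{k}_{B}(P,\fol)$ by the basic Hodge decomposition.

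Next I would use the characterization of $G_\fol$. From $\Delta_\fol G_\fol=\Id-\harm_\fol$ we get, applying $T$ on the left,
\[
T\Delta_\fol G_\fol = T - T\harm_\fol = T - \harm_\fol T = (\Id-\harm_\fol)T = \Delta_\fol G_\fol T,
\]
using the commutation of $T$ with $\harm_\fol$ just established and with $\Delta_\fol$ by hypothesis. Since $T$ commutes with $\Delta_\fol$, the left-hand side is $\Delta_\fol (TG_\fol)$, so $\Delta_\fol(TG_\fol)=\Delta_\fol(G_\fol T)$, i.e.\ $\Delta_\fol(TG_\fol - G_\fol T)=0$. Thus the operator $TG_\fol-G_\fol T$ takes values in $\Omega^{*}_{\Delta_\fol}(P)$.

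To conclude that $TG_\fol-G_\fol T=0$, I would show its image actually lies in $\Imm\Delta_\fol=\Omega^{*}_{\Delta_\fol}(P)^{\perp}$, and then invoke that the intersection of these two spaces is zero. For this note that $G_\fol$ by construction has image in $\Delta_\fol(\Omega^{*}_{B}(P,\fol))=\Imm\Delta_\fol$, so $TG_\fol$ has image in $T(\Imm\Delta_\fol)\subseteq\Imm\Delta_\fol$ by the first step; and $G_\fol T$ likewise has image in $\Imm\Delta_\fol$. Hence $TG_\fol-G_\fol T$ maps into $\Imm\Delta_\fol\cap\Omega^{*}_{\Delta_\fol}(P)=\{0\}$, so $TG_\fol=G_\fol T$, as desired.

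The only genuinely delicate point is making sure the basic Hodge machinery recalled from \cite{PaRi} legitimately supports each algebraic step — in particular that $\Imm\Delta_\fol$ is closed and complementary to $\Omega^{*}_{\Delta_\fol}(P)$, and that $G_\fol\harm_\fol=\harm_\fol G_\fol=0$ — but all of this is already recorded in the displayed decomposition and in equations \eqref{eq:basichodge2}–\eqref{eq:basichodge1} above, so there is no new analytic input; the argument is purely formal once those facts are in hand.
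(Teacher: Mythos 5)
Your argument is correct, and it follows the same two--stage skeleton as the paper's proof: first establish that the operator commutes with the harmonic projector $\harm_\fol$, then deduce that it commutes with $G_\fol$. The execution of each stage is, however, somewhat different. For the first stage the paper juggles the operator identities $G_\fol\Delta_\fol=\Id-\harm_\fol$ and $\Delta_\fol G_\fol=\Id-\harm_\fol$, composing with $S$ and $\harm_\fol$ on either side, whereas you argue via invariance of the two summands $\Omega^{*}_{\Delta_\fol}(P)$ and $\Imm\Delta_\fol$ of the basic Hodge decomposition under $T$ -- a more conceptual route that makes transparent \emph{why} the commutation holds. For the second stage the paper evaluates $G_\fol\Delta_\fol S G_\fol$ in two ways, while you show that the commutator $TG_\fol-G_\fol T$ has image in $\ker\Delta_\fol\cap\Imm\Delta_\fol=\{0\}$; this uses one extra fact, namely that $G_\fol$ takes values in $\Imm\Delta_\fol$, which the paper does record when it defines $G_\fol\colon\Omega^k_B(P,\fol)\to\Omega^k_{\Delta_\fol}(P)^{\perp}=\Delta_\fol(\Omega^k_B(P,\fol))$. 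Both proofs rest on exactly the same inputs from \cite{PaRi}, so your closing caveat is already discharged by the displayed decomposition and equations \eqref{eq:basichodge2}--\eqref{eq:basichodge1}; neither version requires new analytic content.
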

\begin{proof}
Let $S$ be a linear operator on $\Omega^{*}_{B}(P,\fol)$ such that
\begin{equation}\label{eq:commutelaplacian}
\Delta_\fol S = S\Delta_\fol.
\end{equation}
First, we prove that $\harm_\fol S=S\harm_\fol$.
By composing \eqref{eq:basichodge1} with $S$ on the left we have
\begin{equation}\label{eq:basichodge-left}
S G_\fol\Delta_\fol  +S\harm_\fol = S .
\end{equation}
On the other hand, by composing with $S$ on the right and using \eqref{eq:commutelaplacian} we get
\begin{equation}\label{eq:basichodge-right}
 G_\fol S \Delta_\fol+\harm_\fol S = S.
\end{equation}
From \eqref{eq:basichodge-left} and \eqref{eq:basichodge-right} we obtain
\begin{equation*}
G_\fol S \Delta_\fol+\harm_\fol S = S G_\fol\Delta_\fol  +S\harm_\fol.
\end{equation*}
Now, by composing with $\harm_\fol$ on the right and using that $\Delta_\fol \harm_\fol=0$ we get
\begin{equation}\label{eq:HTH=TH}
\harm_\fol S\harm_\fol = S \harm_{\fol}^2 = S \harm_\fol .
\end{equation}
%
Moreover, starting from \eqref{eq:basichodge2} and performing steps similar to the above, one gets
\begin{equation}\label{eq:HTH=HT}
\harm_\fol S\harm_\fol = \harm_{\fol}^2  S=\harm_\fol S.
\end{equation}
From \eqref{eq:HTH=TH} and \eqref{eq:HTH=HT} we conclude that $\harm_\fol$ commutes with $S$ as claimed.

Now, from \eqref{eq:basichodge1} we have
\begin{equation*}
G_\fol \Delta_\fol S G_\fol = (\Id-\harm_\fol) S G_\fol= S G_\fol - \harm_\fol S G_\fol
\end{equation*}
Since $\harm_\fol$ commutes with $S$ and $\harm_\fol G_\fol=0$ we get
\begin{equation}\label{eq:SG}
G_\fol \Delta_\fol S G_\fol =  S G_\fol .
\end{equation}
Moreover, as $S$ commutes with $\Delta_\fol$ and using \eqref{eq:basichodge2} we have
\begin{equation*}
G_\fol \Delta_\fol S G_\fol = G_\fol S \Delta_\fol  G_\fol 
= G_\fol S -  G_\fol S \harm_\fol.
\end{equation*}
Since $S$ commutes with $\harm_\fol$ and $G_\fol  \harm_\fol = 0$ we obtain
\begin{equation}\label{eq:GS}
G_\fol \Delta_\fol S G_\fol =   G_\fol S.
\end{equation}
From \eqref{eq:SG} and \eqref{eq:GS} we  conclude that $S G_\fol =   G_\fol S$.
\end{proof}

 Next, we will prove Theorem \ref{basic-cohomologies}.

\begin{proof}[Proof of Theorem \ref{basic-cohomologies}]
We will proceed in two steps.

\medskip

\noindent {\bf First step.}
Define
\[
\Omega^*_B(P, \fol)^{\lie_W}:=\left\{\beta \in \Omega^{k}_B(P, \fol) \;\middle|\;  { \lie_W \beta=0}  \right\}.
\]
We will prove that the inclusion map
\begin{equation*}
		i: \Omega^*_B(P, \fol)^{\lie_W}\longhookrightarrow \Omega^*_B(P, \fol)
	\end{equation*}
induces an isomorphism between the corresponding cohomology groups
\begin{equation*}
		[i]:H^*_B(P, \fol)^{\lie_W} \longhookrightarrow H^*_B(P, \fol).
	\end{equation*}
 First of all, we will see that $[i]$ is injective.

Let $\beta\in \Omega^k_B(P, \fol)^{\lie_W}$ be a closed form such that $\beta$ is exact in $\Omega^*_B(P, \fol)$.
Let $\gamma:=\delta_\fol G_\fol \beta$, where $\delta_\fol$ is the basic codifferential and $G_\fol$ is the basic Green operator with respect to $\fol$.
Then, by using \eqref{eq:basichodge2} we have
\begin{align*}
d\gamma &=d \delta_\fol G_\fol \beta= \Delta_\fol G_\fol \beta- \delta_\fol d G_\fol \beta\\
 &=(\Id-\harm_\fol)\beta-\delta_\fol G_\fol d\beta=\beta,
\end{align*}
since  $G_\fol$ commutes with $d$,  $d\beta = 0$ and $\harm_\fol \beta = 0$. 

It is clear that $\gamma$ is basic. Thus, in order to show that $\beta$ is exact in $\Omega^*_B(P, \fol)^{\lie_W}$ we have to check that $\lie_W\gamma=0$.

To get $\lie_W\gamma=0$, it is enough to have  $[\lie_W,\delta_\fol]=0$. In fact, if $[\lie_W,\delta_\fol]=0$, then $[\lie_W,d]=0$ implies $[\lie_W,\Delta_\fol]=0$, and hence by Lemma~\ref{basic-warner} $[\lie_W,G_\fol]=0$. Thence
\begin{align*}
\lie_W\gamma=\lie_W \delta_\fol G_\fol \beta= \delta_\fol G_\fol  \lie_W \beta=0.
\end{align*}
It is left to prove that $[\lie_W,\delta_\fol]=0$. Since $W$ is parallel, by \cite[page 109]{Go} we have $[\lie_W,\delta]=0$.
But $\delta_\fol=\Pi_\fol\circ\delta$.
We will now show that $[\lie_W,\Pi_\fol]=0$.  

First, note that since $W$ is a foliated vector field we have
\begin{align}\label{eq:foliated-invariant}
\lie_W \Omega^k_B(P, \fol)\subset \Omega^k_B(P, \fol).
\end{align}
 Indeed, for each $X\in\Gamma (T\fol)$ and $\alpha\in\Omega^k_B(P, \fol)$ we get
\begin{equation*}
i_X\lie_W\alpha=\lie_W i_X\alpha + i_{[X,W]}\alpha=0
\end{equation*}
and
\begin{equation*}
\lie_X\lie_W\alpha=\lie_W \lie_X\alpha + \lie_{[X,W]}\alpha=0.
\end{equation*}
Now, note that \eqref{eq:foliated-invariant} implies
\begin{align*}
\lie_W^* \Omega^k_B(P, \fol)^\perp\subset \Omega^k_B(P, \fol)^\perp,
\end{align*}
where $\lie_W^*$ is the adjoint operator of $\lie_W$ with respect to the global scalar product in $\Omega^k(P)$. But $\lie_W^*=-\lie_W$ since $W$ is Killing \cite[page 109]{Go} and hence we have
\begin{align}\label{eq:foliated-ortogonal-invariant}
\lie_W \Omega^k_B(P, \fol)^\perp\subset \Omega^k_B(P, \fol)^\perp.
\end{align}
Let $\theta\in\Omega^k(P)$. We can decompose it as
\[
\theta=\theta_\fol+\theta_{\fol^\perp}
\]
with  $\theta_\fol=\Pi_\fol\theta\in\Omega^k_B(P, \fol)$ and $\theta_{\fol^\perp}\in\Omega^k_B(P, \fol)^\perp$. Thus
\[
\lie_W\theta=\lie_W\theta_\fol + \lie_W\theta_{\fol^\perp}.
\]
Using \eqref{eq:foliated-invariant} and \eqref{eq:foliated-ortogonal-invariant} we obtain that $\lie_W\theta_{\fol}\in\Omega^k_B(P, \fol)$ and $\lie_W\theta_{\fol^\perp}\in\Omega^k_B(P, \fol)^\perp$. Hence
\[
\Pi_\fol\lie_W\theta=\lie_W\theta_\fol=\lie_W\Pi_\fol\theta,
\]
for every $\theta\in\Omega^k(P)$. Thus, $[\lie_W,\Pi_\fol]=0$ as claimed.

It follows that
\[
[\lie_W,\delta_\fol]=[\lie_W,\Pi_\fol]\delta+ \Pi_\fol[\lie_W,\delta]=0.
\]
Thus, we can conclude that $\lie_W\gamma=0$. Therefore $\beta$ is exact also in $\Omega^*_B(P, \fol)^{\lie_W}$.  This proves the injectivity of $[i]$.

Next, we will see that $[i]$ is surjective.

Now, let $\beta\in\Omega^k_{\Delta_\fol}(P)$. Then $\lie_W \beta=d i_W \beta$ is exact.
 Moreover, from (\ref{eq:foliated-invariant}), $\lie_W\beta \in \Omega^k_B(P, \fol)$. Hence, by basic Hodge theory we have
\begin{align*}
\lie_W\beta &=d \delta_\fol G_\fol \lie_W\beta\\
&=d  G_\fol \delta_\fol \lie_W\beta\\
&=d  G_\fol  \lie_W\delta_\fol\beta=0,
\end{align*}
where we used Lemma~\ref{basic-warner} and $[\lie_W,\delta_\fol]=0$.
This shows that $\beta$ is $\lie_W$-invariant and hence belongs to the image of $[i]$, proving surjectivity.

\medskip

\noindent {\bf Second step.}
For a closed $\fol$-basic $k$-form  $\beta\in \Omega^k_{B}(P,\fol)$,
let us denote by $[\beta]_{\fol}$ its cohomology class in $H^k_{B}(P,\fol)$.
We will now prove that
for each integer $k$ between $0$ and $p$, the map
\begin{equation}\label{iso-well}
 ([i_W\eps_w ]_{\folw}, [i_W]_{\folw}):  H^{k}_{B}(P,\fol)^{\lie_W} \longrightarrow  H^k_{B}(P,\folw) \oplus H^{k-1}_{B}(P,\folw)
\end{equation}
defined by
\begin{equation*}\label{Idwk}
([i_W\eps_w ]_{\folw}, [i_W]_{\folw})([\beta]_{\fol}) =  ([i_W\eps_w \beta]_{\folw}, [i_W\beta]_{\folw})
\end{equation*}
is an isomorphism.
Let us start by checking that it is well defined.
Consider $\beta\in \Omega^{k}_{B}(P,\fol)^{\lie_W}$.
We will first show that
\begin{equation}\label{ieps-well}
i_W\eps_w \beta\in\Omega^{k}_{B}(P,\folw).
\end{equation}


Now, for any $X\in\Gamma (T\fol)$  we have that the graded commutator $[i_X, \eps_w]$ satisfies
\begin{equation}\label{ieps}
[i_X, \eps_w]=\eps_{i_X w}=0,
\end{equation}
as $i_X w=g(W,X)=0$, and
\begin{equation}\label{leps}
[\lie_X, \eps_w]=\eps_{\lie_X w}=0,
\end{equation}
as  $\lie_X w=i_X dw=0$.
Thus, by using \eqref{ieps} and \eqref{leps} we get
\begin{align*}
\lie_X(i_W\eps_w \beta)&=i_W\lie_X\eps_w \beta + i_{[X,W]}\eps_w \beta\\
                &=i_W\eps_w \lie_X\beta - \eps_w i_{[X,W]} \beta=0,
\end{align*}
since  $[X,W]\in\Gamma (T\fol)$ and $i_{[X,W]} w=0$. Moreover,
\begin{equation*}
i_X (i_W\eps_w \beta)=i_W\eps_w i_X \beta=0.
\end{equation*}
Further, we have that $i_W(i_W\eps_w \beta)=0$ and
\begin{align*}
\lie_W(i_W\eps_w \beta)&=i_W\lie_W\eps_w \beta =i_W\eps_w \lie_W\beta + i_{W} \eps_{\lie_Ww} \beta=0,
\end{align*}
as $\lie_W\beta=0$,  $[\lie_W,\eps_w]=\eps_{\lie_W w}$  and $\lie_Ww=i_Wdw+d1=0$. This completes the proof of \eqref{ieps-well}.

Now, we will check that
\begin{equation}\label{iW-well}
i_W \beta\in\Omega^{k-1}_{B}(P,\folw).
\end{equation}
Indeed, the conditions $i_W (i_W \beta)=0$ and $\lie_W (i_W \beta)=0$ are clearly satisfied.
Moreover, for any $X\in\Gamma (T\fol)$ we have $i_X (i_W \beta)=-i_W (i_X \beta)=0$ and
\begin{equation*}
\lie_X (i_W \beta)=i_W \lie_X \beta+ i_{[X,W]}\beta=0.
\end{equation*}
Moreover, note that $i_W\beta$ and $i_W\eps_w \beta$  are closed, assuming that $\beta\in \Omega^{k}_{B}(P,\fol)^{\lie_W}$ is closed. Indeed, we have
\begin{equation*}
d (i_W \beta)=\lie_W \beta -i_W d \beta=0
\end{equation*}
and
\begin{align*}
d (i_W\eps_w \beta)&=\lie_W \eps_w \beta -i_W d \eps_w \beta\\
                    &=\eps_w \lie_W \beta  +i_W  \eps_w d\beta=0,
\end{align*}
since $\lie_W$ commutes with $\eps_w$ and  $dw=0$.   

Suppose $\beta,\beta'\in \Omega^{k}_{B}(P,\fol)^{\lie_W}$ are  closed forms, with $\beta-\beta'=d\gamma$, for some $\gamma\in \Omega^{k-1}_{B}(P,\fol)^{\lie_W}$. Then
\begin{align*}
i_W\eps_w \beta- i_W\eps_w \beta' &=i_W\eps_w d\gamma \\
                                    &= - i_W d \eps_w \gamma\\
                                     & = d i_W \eps_w \gamma -\lie_W \eps_w \gamma\\
                                      & = d i_W \eps_w \gamma -\eps_w \lie_W  \gamma= d (i_W \eps_w \gamma)
\end{align*}
and  $i_W \eps_w \gamma\in\Omega^{k-1}_{B}(P,\folw)$, due to \eqref{ieps-well}.
Therefore
\begin{equation*}
[i_W\eps_w \beta]_\folw = [i_W\eps_w \beta']_\folw.
\end{equation*}
We also have that
\begin{equation*}
[i_W \beta]_\folw = [i_W \beta']_\folw.
\end{equation*}
Indeed,
\begin{equation*}
i_W \beta-i_W \beta'= i_W d\gamma= -d(i_W \gamma)
\end{equation*}
and $i_W \gamma$ belongs to $\Omega^{k-2}_{B}(P,\folw)$, due to \eqref{iW-well}.
We conclude that the map \eqref{iso-well} is well defined.
Consider now the map
\begin{equation}\label{inv-well}
 [ (Id,\eps_w)]:     H^k_{B}(P,\folw) \oplus H^{k-1}_{B}(P,\folw) \longrightarrow H^{k}_{B}(P,\fol)^{\lie_W}
\end{equation}
defined by
\begin{equation*}
[ (Id,\eps_w)]([\alpha]_{\folw},[\beta]_{\folw}) =  [\alpha +\eps_w\beta]_{\fol}.
\end{equation*}
Let us check that it is well defined.
Clearly, we have  $\alpha\in\Omega^{k}_{B}(P,\folw)\subset\Omega^{k}_{B}(P,\fol)^{\lie_W}$.
Moreover, let $\beta\in\Omega^{k-1}_{B}(P,\folw)$. Then
$\lie_W(\eps_w\beta)=\eps_w\lie_W\beta=0$.  
Now, let $X\in\Gamma (T\fol)$. Then
\[
i_X(\eps_w\beta)=-\eps_w i_X\beta=0,
\]
due to \eqref{ieps}, and
\[
\lie_X(\eps_w\beta)=\eps_w \lie_X\beta=0,
\]
thanks to \eqref{leps}.
We conclude that
\[
\alpha +\eps_w\beta\in\Omega^{k}_{B}(P,\fol)^{\lie_W}.
\]
Suppose $d\alpha=0$ and $d\beta=0$. Then
\[
d(\alpha +\eps_w\beta)=d\alpha -\eps_w d\beta=0.
\]
Now, assume that $\alpha,\alpha'\in\Omega^{k}_{B}(P,\folw)$ with $\alpha-\alpha'=d\gamma$, for some $\gamma\in\Omega^{k-1}_{B}(P,\folw)$.
Then $\gamma\in\Omega^{k-1}_{B}(P,\fol)^{\lie_W}$ and $[\alpha]_\fol=[\alpha']_\fol$.
Suppose $\beta,\beta'\in\Omega^{k-1}_{B}(P,\folw)$, with $\beta-\beta'=d\tilde\gamma$, for some $\tilde\gamma\in\Omega^{k-2}_{B}(P,\folw)$.
Then,  we have $[\eps_w\beta]_\fol=[\eps_w\beta']_\fol$.
Indeed
\[
d \eps_w \tilde\gamma= \eps_w\beta -\eps_w\beta'
\]
and it is easy to check that $\eps_w \tilde\gamma\in\Omega^{k-1}_{B}(P,\fol)^{\lie_W}$.

This completes the proof that the map \eqref{inv-well} is well defined.
Finally, one can easily check that it is the inverse of the map \eqref{iso-well}.
\end{proof}


\section{Proof of Theorem~\ref{basic-theorem}}
\label{basic-proof}
\begin{proof}
 We will proceed in two steps.

 \noindent {\bf First step.} Using the results in \cite{elkacimi} on Hard Lefschetz isomorphisms for transversely K\"ahler foliations and the fact that the foliation generated by the Lee and anti-Lee vector fields on $M$ is transversely K\"ahler, we will see that it is possible to define a morphism
\[
T_k: H^{2n+1-k}_B(M, U) \to H^{k}_B(M, U), \; \; \; \mbox{ for } 0 \leq k \leq n.
\]
This map will be just the inverse morphism of the $U$-basic Lefschetz isomorphism in degree $k$.

Since the anti-Lee vector field $V$ of our compact Vaisman manifold $M^{2n+2}$
is unitary and Killing, we can apply \cite[Theorem~6.13]{tondeur} to $V$.
We obtain a long exact sequence of cohomology groups
\begin{equation*}\label{long}
\dots \to H^k_B(M, V) \xrightarrow{ [\eps_{d\eta}]} H^{k+2}_B (M,V)
\xrightarrow{ [\id]}  H^{k+2}(M) \xrightarrow{[i_V]} H^{k+1}_B (M,V) \to \dots
\end{equation*}
where for a map $f$ between spaces of forms we write $[f]$ for the induced map
in cohomology.  Note that, following the notation in Section \ref{results}, $H^k_B(M, V)$ is the basic cohomology of $M$ associated with the vector field $V$.

Now we apply Theorem~\ref{basic-cohomologies} in  terms of the type $H^k_B(M, V)$ in the above sequence, with $\fol=\langle V \rangle$ and $W=U$.  Moreover, we use  Corollary~\ref{basic-cohomology-DeRham-cohomology} for the terms of the form $H^{k}(M)$, again with $W=U$.
Since $[i_U, i_V] =0$ and $[i_U, \eps_{d\eta}] = \eps_{i_U d\eta} =0$, we get
that the following diagram is commutative

\begin{equation*}
\label{double}
\begin{gathered}
\xymatrix@C4.4em{
H^{k}_B(M, V) \ar[r]^-{[\eps_{d\eta}]} \ar@{<-}[d]_-{[(\id,\eps_\omega)]}^-{\cong}  & H^{k+2}_{B} (M, V)
\ar[r]^-{[\id]} \ar@{<-}[d]_-{[(\id,\eps_\omega)]}^-{\cong}  & H^{k+2}(M)
\ar@{<-}[d]_-{[(\id,\eps_\omega)]}^-{\cong} \ar[r]^-{[i_V]} &
H^{k+1}_B(M, V)
\ar@{<-}[d]_-{[(\id,\eps_\omega)]}^-{\cong}\\
{\begin{smallmatrix}
H^k_B(M,\left\langle U,V \right\rangle)\\
\oplus \\
H^{k-1}_B(M,\left\langle U,V \right\rangle)
\end{smallmatrix}}
\ar@{.>}[r]^-{
\left(
\begin{smallmatrix}
[\eps_{d\eta}] & 0 \\ 0& [\eps_{d\eta}]
\end{smallmatrix}
 \right)
} &
{\begin{smallmatrix} H^{k+2}_B(M,\left\langle U,V \right\rangle)\\ \oplus \\
H^{k+1}_B(M,\left\langle U,V \right\rangle)
\end{smallmatrix}}
\ar@{.>}[r]^-{
\left(
\begin{smallmatrix}
[\id] & 0 \\ 0& [\id]
\end{smallmatrix}
 \right)
}  &
{\begin{smallmatrix} H^{k+2}_B(M,U)\\ \oplus \\
H^{k+1}_B(M,U)
\end{smallmatrix}}
\ar@{.>}[r]^-{\left(
\begin{smallmatrix}
[i_V] & 0 \\ 0& [i_V]
\end{smallmatrix}
 \right)} &
{\begin{smallmatrix} H^{k+1}_B(M,\left\langle U,V \right\rangle)\\ \oplus \\
H^{k}_B(M,\left\langle U,V \right\rangle)
\end{smallmatrix}}
\\
 }
\end{gathered}
\end{equation*}

Since the vertical maps in the above diagram are isomorphisms and the upper sequence is exact, we get that the lower sequence is also exact.
Moreover, note that the bottom sequence splits in two copies of
\begin{equation}
\label{basic}
\dots \to H^k_B(M, \left\langle U,V \right\rangle) \xrightarrow{ [\eps_{d\eta}]}
H^{k+2}_B (M,\left\langle U,V \right\rangle)
\xrightarrow{ [\id]}  H^{k+2}_B(M,U) \xrightarrow{[i_V]} H^{k+1}_B (M,\left\langle
U,V \right\rangle) \to \dots
\end{equation}
Since every Vaisman manifold is transversely K\"ahler with respect to the 2-dimensional foliation defined by the Lee and { anti-Lee} vector fields,
we can apply  El Kacimi-Alaoui results \cite{elkacimi}.  In fact, he proves a transversely Hard Lefschetz theorem
in cohomology \cite[page 97]{elkacimi} which in our case gives that
\begin{equation}
\label{THLT}
\lef_{k}^{UV}:=[\eps_{d\eta}]^{n-k} \colon H^k_B(M,\left\langle U,V \right\rangle) \to
H^{2n-k}_B(M,\left\langle U,V \right\rangle)
\end{equation}
 is an isomorphism, for all  $0\le k \le n$.
Therefore,  the map
\begin{equation*}
[\eps_{d\eta}] \colon H^{k-1}_B(M,\left\langle U,V \right\rangle) \to
H^{k+1}_B(M,\left\langle U,V \right\rangle)
\end{equation*}
is injective, for all $1\le k\le n$.

Now, consider the following part of the long exact sequence \eqref{basic}
\begin{equation*}\label{partlong}
H^{k}_B (M,\left\langle U,V \right\rangle)
\xrightarrow{ [\id]}
H^{k}_B (M,U)
\xrightarrow{[i_V]} H^{k-1}_B(M,\left\langle U,V \right\rangle)
\xrightarrow{ [\eps_{d\eta}]} H^{k+1}_B (M,\left\langle U,V \right\rangle)
\to \dots
\end{equation*}
For $1\le k \le n$, since $[\eps_{d\eta}]$ is injective we get that $[i_V]$ is zero
and hence the map
\begin{equation*}
H^{k}_B (M,\left\langle U,V \right\rangle)
\xrightarrow{ [\id]}
H^{k}_B (M,U)
\end{equation*}
is surjective.
In particular, for $k=1$ we have that
\begin{equation}\label{isomorphismuno}
[Id]: H^1_B(M, \left\langle U, V \right\rangle) \rightarrow H^1_B(M, U)
\end{equation}
is an isomorphism.
Moreover, taking also into account the injectivity of $[\eps_{d\eta}]$, from  \eqref{basic}   we obtain the following short exact sequence
\begin{equation}\label{short}
0  \to H^{k-2}_B(M, \left\langle U,V \right\rangle) \xrightarrow{ [\eps_{d\eta}]}
H^{k}_B (M,\left\langle U,V \right\rangle) \xrightarrow{ [\id]}  H^{k}_B(M,U) \to 0
\end{equation}
for $2\le k\le n$.

The transversal Hard Lefschetz theorem also implies that the map
\begin{equation*}
[\eps_{d\eta}] \colon H^{2n-k-1}_B(M,\left\langle U,V \right\rangle) \to
H^{2n-k+1}_B(M,\left\langle U,V \right\rangle)
\end{equation*}
is surjective, for each $1\le k\le n$.
We consider the following part of the long exact sequence \eqref{basic}
\begin{equation}\label{partlong2}
H^{2n-k-1}_B(M, \left\langle U,V \right\rangle) \xrightarrow{[\eps_{d\eta}]} H^{2n-k+1}_B(M,\left\langle U,V \right\rangle)
\xrightarrow{[\id]} H^{2n-k+1}_B(M,U)\xrightarrow{[i_V]}H^{2n-k}_B(M, \left\langle U,V \right\rangle)
\end{equation}
for $1\le k\le n$. As $[\eps_{d\eta}]$ is surjective, we get that
$$[\id]:H^{2n-k+1}_B(M,\left\langle U,V \right\rangle)\longrightarrow H^{2n-k+1}_B(M,U)$$
is a zero map. Hence
\[
[i_V]:H^{2n-k+1}_B(M,U)\longrightarrow H^{2n-k}_B(M,\left\langle U,V \right\rangle)
\]
is injective. In particular, for $k=1$, the map
{
\begin{equation}\label{2n-isomorphism}
[i_V]:H^{2n}_B(M,U)\longrightarrow H^{2n-1}_B(M,\left\langle U,V \right\rangle)
\end{equation}
is an isomorphism since $H^{2n+1}_B(M, \left\langle U, V \right\rangle ) = \{ 0 \}$}.
For $2\le k\le n$, we get an exact sequence
\begin{equation}\label{short2}
0  \to H^{2n-k+1}_B (M,U) \xrightarrow{ [i_V]}
H^{2n-k}_B (M, \left\langle U,V \right\rangle) \xrightarrow{ [\eps_{d\eta}]}  H^{2n-k+2}_B(M,\left\langle U,V \right\rangle) \to 0.
\end{equation}

Using the transversal Hard Lefschetz isomorphism \eqref{THLT}, we can zip the exact sequences \eqref{short} and \eqref{short2}
like in the following diagram
\begin{small} 
\begin{equation} \label{magic}
\begin{gathered}
\xymatrix@C1em{
0 \ar[rr] && H^{k-2}_B(M,\left\langle U,V \right\rangle)
\ar[rr]^-{[\eps_{d\eta}]} \ar[d]_{\lef_{k-2}^{UV}}^{\cong}
&& H^{k}_{B} (M,\left\langle U,V \right\rangle)
\ar[rr]^-{[\id]} \ar[d]^{\lef_{k}^{UV}}_{\cong}
&& H^k_B(M,U) \ar[r] & 0 \\
0 && H^{2n -k+2}_B (M, \left\langle U,V \right\rangle) \ar[ll] &&
H^{2n-k}_B (M,\left\langle U,V \right\rangle) \ar[ll]_-{[\eps_{d\eta}]}  &&
H^{2n-k+1}_B (M,U) \ar[ll]_-{[i_V]} \ar@{.>}[u]^{T_k} & 0 \ar[l]   \\
 }
\end{gathered}
\end{equation}
\end{small}
where the operator $T_k$ in the diagram is defined as
\begin{align*}
T_k:&=[\id]\circ  (\lef_{k}^{UV})^{-1}\circ [i_V], \mbox{ for } 2\leq k \leq n.
\end{align*}
Note that $T_k$, with $k = 0, 1$, may be defined in a similar way, that is,
\begin{equation}\label{defi01}
T_k:=[\id]\circ  (\lef_{k}^{UV})^{-1}\circ [i_V], \mbox{ for } k = 0, 1.
\end{equation}
\noindent {\bf Second step.} We will see that the linear map
\[
T_k: H^{2n+1-k}_B(M, U) \rightarrow H^{k}_B(M, U), \mbox{ with } 0 \leq k \leq n,
\]
is an isomorphism  and that the inverse morphism of $T_k$ is just the $U$-basic Lefschetz isomorphism in degree $k$
\[
\lef_k^{U}: H^k_B(M, U) \to H^{2n+1-k}_B(M, U).
\]
For $k=1,$ using \eqref{defi01}  we  have that $T_1$ is an isomorphism, as the maps in \eqref{isomorphismuno} and \eqref{2n-isomorphism} are isomorphisms.

On the other hand, from Theorem \ref{basic-cohomologies} and Corollary \ref{basic-cohomology-DeRham-cohomology}, we deduce that the maps
\[
[\eps_w]: H^{2n+1}_B(M, U) \to H^{2n+2}(M), \; \; \; [\eps_w]: H^{2n}_B(M, \left\langle U, V \right\rangle) \to H^{2n+1}_B(M, V)
\]
are isomorphisms and that the inverse morphisms are
\[
[i_U]: H^{2n+2}(M) \to H^{2n+1}_B(M, U), \; \; \; [i_U]: H^{2n+1}_B(M, V) \to H^{2n}_B(M, \left\langle U, V \right\rangle).
\]
Thus, since $H^{2n+2}(M) = \left\langle [\omega \wedge \eta \wedge (d\eta)^n] \right\rangle$, we have that
$H^{2n+1}_B(M, U) = \left\langle[\eta \wedge (d\eta)^n]\right\rangle$, Further, we have
$H^{2n}_B(M, \left\langle U, V \right\rangle) = \left\langle [(d\eta)^n] \right\rangle$ and the map
\begin{equation}\label{i-V-2n+1}
[i_V]: H^{2n+1}_B(M, U) \to H^{2n}_B(M, \left\langle U, V \right\rangle)
\end{equation}
is an isomorphism.
Moreover, it is clear that the map
\begin{equation}\label{Id-0}
[\id]: H^{0}_B(M, \left\langle U, V \right\rangle) \to H^{0}_B(M, U)
\end{equation}
also is an isomorphism.
Therefore, from \eqref{defi01} we deduce that $T_0$  is an isomorphism.

Next, we will show that $T_k$ is an isomorphism of vector spaces for $2\le k\le n$. 

Let us start by proving the injectivity of  $T_k$.  Let $x\in H^{2n-k+1}_B (M,U)$ be such that $[\id]\circ (\lef_{k}^{UV})^{-1} \circ [i_V]x=0$.
Since the upper sequence is exact, there exists
$w\in H^{k-2}_B(M,\left\langle U,V \right\rangle)$ such that
\[
[\eps_{d\eta}]w=(\lef_{k}^{UV})^{-1} \circ [i_V]x=0.
\]
We also have $[\eps_{d\eta}] [i_V]x=0$.  Hence,
\[
\lef_{k-2}^{UV}w=[\eps_{d\eta}] \circ \lef_{k}^{UV} \circ [\eps_{d\eta}] w= [\eps_{d\eta}] \circ [i_V]x=0.
\]
As $\lef_{k-2}^{UV}$ is an isomorphism, we get that $w=0$ and therefore $(\lef_{k}^{UV})^{-1} \circ [i_V]x=0$.
Since $(\lef_{k}^{UV})^{-1}$ is an isomorphism, we get  $[i_V]x=0$.
As $[i_V]$ is injective we conclude that  $x=0$.

Now we will check surjectivity. Let $w\in H^{k}_B (M,U)$. Then there exists
$z\in H^{k}_B (M, \left\langle U,V \right\rangle)$ such that ${[\id]z=w}$. Define
\begin{equation}\label{eq:zprime}
z'=z- [\eps_{d\eta}] \circ (\lef_{k-2}^{UV})^{-1} \circ [\eps_{d\eta}] \circ\lef_{k}^{UV} z.
\end{equation}
As  $[\id] \circ [\eps_{d\eta}]=0$, we get that
\[
[\id]z'=[\id]z=w.
\]
Further, we multiply \eqref{eq:zprime} by $[\eps_{d\eta}] \circ\lef_{k}^{UV}$ on the left and we use that
$[\eps_{d\eta}] \circ \lef_{k}^{UV}\circ [\eps_{d\eta}] = \lef_{k-2}^{UV}$. We get
\[
[\eps_{d\eta}] \circ\lef_{k}^{UV} z'=[\eps_{d\eta}] \circ\lef_{k}^{UV}z -  [\eps_{d\eta}] \circ\lef_{k}^{UV} z=0.
\]
Thus $(\lef_{k}^{UV})z'$ is in the kernel of $[\eps_{d\eta}]$ and therefore the exactness of the bottom sequence
implies that there is $x\in H^{2n-k+1}_B (M,U)$ such that $[i_V]x=(\lef_{k}^{UV})z'$.  Then,
\[
Tx=[\id] \circ(\lef_{k}^{UV})^{-1} \circ [i_V]x= [\id] z'=w.
\]
We conclude that $T$ is also surjective.

Next, we will show that property $(i)$ in the claim of Theorem~\ref{basic-theorem} holds. 

Let $[\beta]_{U} \in H^{k}_{B}(M,U)$. Then, due to the surjectivity of $[\id]$ in \eqref{magic},
there is $z\in H^{k}_B(M,\left\langle U,V \right\rangle)$ such that $[\id]z=[\beta]_{U}$. Let
$\beta''\in \Omega^{k}_B(M,\left\langle U,V \right\rangle)$
 be the $\left\langle U,V \right\rangle$-harmonic form in the cohomology class $z$.
Then
\begin{equation*}
[\beta'']_{U}=  [\id][\beta'']_{UV} =[\id]z=[\beta]_{U},
\end{equation*}
 where $[\beta'']_{UV}$ is the cohomology class in $H^k_B(M, \left\langle U, V \right\rangle )$ induced by $\beta''$. Moreover, using a result in \cite{elkacimi} (see \cite[Proposition 3.4.5]{elkacimi}), we have that the operator $\eps_{d\eta}$ commutes with the $\left\langle U, V \right\rangle$-basic Laplacian and, so, $\eps_{d\eta}$ sends $\left\langle U, V \right\rangle$-basic-harmonic forms into $\left\langle U, V \right\rangle$-basic-harmonic forms. Thus,
\[
\eps_{d\eta}^{n-k+1}\beta''\in \Omega^{2n-k+2}_{\Delta_\fol}(M,\left\langle U,V \right\rangle),
\]
 and, from \eqref{THLT}, we have that
\[
(\eps_{d\eta})^{n-l}: \Omega^{l}_{\Delta_\fol}(M, \left\langle U, V \right\rangle ) \to \Omega^{2n-l}_{\Delta_\fol}(M, \left\langle U, V \right\rangle )
\]
is an isomorphism for $0 \leq l \leq n$.
This implies that there exists
$\gamma\in \Omega^{k-2}_{\Delta_\fol}(M,\left\langle U,V \right\rangle)$ such that 
\begin{equation*}
\eps_{d\eta}^{n-k+2}\gamma=\eps_{d\eta}^{n-k+1}\beta''.
\end{equation*}
So,
\begin{equation*}
L^{n-k+1}(\beta''-\eps_{d\eta}\gamma)=\eps_{d\eta}^{n-k+1}(\beta''-\eps_{d\eta}\gamma)=0.
\end{equation*}
Next, we check that $\beta':=\beta''-\eps_{d\eta}\gamma$ has the required properties in $(i)$.
Indeed, we have
\begin{equation*}
i_V(\beta''-\eps_{d\eta}\gamma)=0,
\end{equation*}
as $\beta''\in\Omega^{k}_B(M,\left\langle U,V \right\rangle)$, $i_V d\eta=0$  and $\gamma \in\Omega^{k-2}_B(M,\left\langle U,V \right\rangle)$.
Moreover, we have
\begin{equation}\label{eq:betaclass}
[\beta''-\eps_{d\eta}\gamma]_{U}=[\beta'']_{U}.
\end{equation}
In fact
$\eps_{d\eta}\gamma=d(\eta\wedge\gamma)$,
as $\gamma$ is closed. Moreover $\eta\wedge\gamma\in\Omega^{k}_B(M,U)$, as $i_U(\eta\wedge\gamma)=0$ and
\begin{equation*}
\lie_U(\eta\wedge\gamma)=i_U d(\eta\wedge\gamma)= i_U (d\eta\wedge\gamma)=0,
\end{equation*}
since $i_U d\eta=0$ and $i_U \gamma=0$. Thus $[\eps_{d\eta}\gamma]_{U}=0$ and \eqref{eq:betaclass} holds.

Now, we consider  the map
\[
\lef_k^{U}: H^{k}_{B}(M,U) \longrightarrow H^{2n+1-k}_{B}(M,U)
\]
defined as the inverse of the isomorphism $T_k$, that is, $\lef_k^{U}=T_k^{-1}$.
We will show that $\lef_k^{U}$ can be computed as in $(ii)$:
if $\beta' \in [\beta]_{U}$ satisfies the conditions in $(i)$, then
\[
\lef_{k}^{U}[\beta]_{U} = [\epsilon_{\eta}L^{n-k}\beta']_{U}.
\]
First of all, condition $(i)$ implies
$$d(\epsilon_{\eta}L^{n-k}\beta')=L^{n-k+1}\beta'=0.$$
Next,
\begin{equation*}
i_U(\epsilon_{\eta}L^{n-k}\beta')=0,
\end{equation*}
as $i_U \eta=0$, $i_U d\eta=0$ and $\beta'$ is $U$-basic.
We conclude that $\epsilon_{\eta}L^{n-k}\beta'$ is a closed form in $\Omega^{2n+1-k}_B(M,U)$.
Moreover,  from (\ref{THLT}),
\begin{align*}
T_k[\epsilon_{\eta}L^{n-k}\beta']_{U}&= [\id]\circ  (\lef_{k}^{UV})^{-1}\circ [i_V] [\epsilon_{\eta}L^{n-k}\beta']_{U}\\
                                    &= [\id]\circ  (\lef_{k}^{UV})^{-1} [\eps_{d\eta}^{n-k} \beta']_{UV}\\
                                      &=  [\id][\beta']_{UV}=[\beta']_{U}=[\beta]_{U}.
\end{align*}
Therefore,
\[
T_k^{-1}[\beta]_{U} = [\epsilon_{\eta}L^{n-k}\beta']_{U}=\lef_k^{U}[\beta]_{U}.
\]
\end{proof}


\section{Proof of Theorem \ref{basic-Hard-Lef-property} }\label{Hard-Lefschetz-property}
\label{proof14}

\begin{proof}
{\bf Proof of $(1)$ in Theorem \ref{basic-Hard-Lef-property}}

\medskip

\noindent{\em (Lefschetz $\Rightarrow $ basic Lefschetz)}

Assume that the l.c.s. structure $(\omega, \eta)$ is Lefschetz and denote by
\[
[i]: H^k_{B}(M, U) \longrightarrow H^{k}(M)
\]
the canonical monomorphism, that is,
$[i] = [(Id, \epsilon_{\omega})]|_{H^k_{B}(M, U) \oplus \{0\}}$ (cf.  Corollary \ref{basic-cohomology-DeRham-cohomology}).

Let $\beta$ be a closed basic $k$-form on $M$, with $k \leq n$. Then, there exists
a closed $k$-form $\beta'$ on $M$ such that $\beta' \in [\beta]$ and
\begin{equation}\label{proper-beta-prima}
{\mathcal L}_U\beta' = 0, \; \; i_V\beta' = 0, \; \; L^{n-k+2}\beta' = 0, \; \; L^{n-k+1}\epsilon_{\omega}\beta' = 0.
\end{equation}
Let $\bar{\beta} = G\delta (\beta'-\beta) $, where $G$ is the Green operator
on $M$. Then $\lie_U \bar{\beta}=0$
since ${\mathcal L}_{U}\beta =0$ and $\lie_U$ commutes with $G$ and $\delta$, as $U$ is a Killing vector field.

From the properties of the Green operator, it follows that
\[
\beta' = \beta + d \bar{\beta}.
\]
Therefore, if we take
\begin{equation}\label{beta-1-prima}
\beta'' = \beta' - \omega \wedge i_{U}\beta'
\end{equation}
then $d \beta'' = 0$, $\beta'' \in \Omega^k_{B}(M, U)$ and
\[
\beta'' = \beta + d(\bar{\beta} - \omega \wedge i_U\bar{\beta}).
\]
This implies that
 \[
\beta'' \in [\beta]_U.
\]
Moreover, from (\ref{proper-beta-prima}) and (\ref{beta-1-prima}), it follows that
\[
i_V\beta'' = 0, \; \; \; L^{n-k+1}\beta'' =0.
\]
This shows that every class  $[\beta]_U$ contains a representative $\beta''$ that
satisfies~\eqref{basic-conditions}.

We define $\lef_k^U: H^k_B(M, U) \longrightarrow H^{2n+1-k}_B(M, U)$ by
 \begin{equation}\label{Lef-k-B}
\lef_k^U[\beta]_U = [\epsilon_\eta L^{n-k}\beta']_U,
\end{equation}
where $\beta'$ is any representative of  $[\beta]_U$ that
satisfies~\eqref{basic-conditions}.
Note that $\epseta L^{n-k}\beta'$ is closed as $d(\epseta L^{n-k} \beta') =
L^{n-k+1} \beta' =0$ and basic because
\begin{equation*}
	i_U \epseta L^{n-k} \beta' = \epseta L^{n-k} i_U \beta' =0.
\end{equation*}
We must prove that $\lef_k^U$ is well defined and is an isomorphism.
To show that $\lef_k^U$ is well defined, we have to check that for any
 $\beta'$, $\beta''\in [\beta]_U$ that satisfy~\eqref{basic-conditions}, one
has
 \begin{equation*}
	[\eps_\eta L^{n-k}\beta']_U = [\eps_\eta L^{n-k} \beta'']_U.
\end{equation*}
Since $\beta'$ and $\beta''$
are basic and
\begin{align*}
	i_V \beta' &=0, & L^{n-k+1}\beta' & =0, & i_V\beta''&=0, &
	L^{n-k+1}\beta''&=0,
\end{align*}
it is easy to check that $\beta'$ and $\beta''$
satisfy~\eqref{proper-beta-prima}. Moreover, they are representatives of
$[\beta]\in H^k(M)$.
Thus by the defining properties of $\lef_k$, we have
\begin{equation}
	\label{epseta}
	[\eps_\eta L^{n-k}\beta'] = \lef_k[\beta] = [\eps_\eta L^{n-k} \beta''].
\end{equation}
Thus  $[i][\eps_\eta L^{n-k}\beta']_U = [i][\eps_\eta L^{n-k}\beta'']_U$.
Since $[i]$ is a monomorphism, we get \allowbreak  $[\eps_\eta L^{n-k}\beta']_U = [\eps_\eta
L^{n-k}\beta'']_U$. Hence $\lef_k^U$ is well defined and satisfies the required
property for its computation.

From~\eqref{epseta}, we get that
 \begin{equation*}
\lef_k \circ\, [i][\beta]_U = [i] \circ  \lef_k^U [\beta]_U
\end{equation*}
for any  $[\beta]_U\in H^k_B(M, U)$. Thus
if $\lef_k^U[\beta]_U =0$ then $\lef_k [i][\beta]_U =0$. As $\lef_k$ is an
isomorphism, we get that $[i][\beta]_U =0$. Since $[i]$ is
monomorphism this implies that  $[\beta]_U=0$, that is $\lef_k^U$ is injective.

By Poincar\'e duality for basic cohomology \cite[Theorem~7.54]{tondeur}
(see also calculations on page 69 of~\cite{tondeur}), it follows that
\begin{equation*}
	\dim H^k_B(M, U) = \dim H^{2n+1-k}_B(M, U).
\end{equation*}
Thus  $\lef^k_U$ is an isomorphism, being an injective map between two vector
spaces of the same dimension.

\medskip

\noindent ({\em Basic Lefschetz $\Rightarrow $ Lefschetz})

Assume that the l.c.s. structure on $M$ is basic Lefschetz.

Let $\gamma$ be a closed $k$-form on $M$, with $k \leq n$.
 We have to show that there is $\gamma'\in [\gamma]$ satisfying
\eqref{proper-beta-prima}.
Note that
\[
[(Id, \eps_\omega) ]^{-1}[\gamma] = ( [\gamma_1]_U, [\gamma_2]_U)\in \Omega^k_B(M, U)\oplus
\Omega^{k-1}_B(M, U)
\]
for some basic forms $\gamma_1$ and $\gamma_2$ on $M$.
Since the basic Lefschetz property holds, we can choose
 $\gamma_1' \in [\gamma_1]_U$ and $\gamma_2'\in [\gamma_2]_U$ such that
\[
   i_V\gamma_1' = 0, \; \;  L^{n-k+1}\gamma_1' = 0,
\]
and
\[
 i_V\gamma_2' = 0, \; \;  L^{n-k+2}\gamma_2' = 0.
\]
Thus, we can consider the closed $k$-form
\[
\gamma' = \gamma_1' + \omega \wedge \gamma_2'
\]
and it is easy to see that $\gamma'\in [\gamma]$ and
\[
 {\mathcal L}_{U}\gamma' = 0, \; \; i_V\gamma' = 0, \; \; L^{n-k+2}\gamma' =
 0,\; \;  L^{n-k+1}\epsilon_\omega \gamma' = 0.
\]

%
%
%
Now, consider the following isomorphisms
\[
[ (Id,\eps_\omega) ]^{-1}: H^{k}_{}(M) \longrightarrow H^{k}_{B}(M, U) \oplus H^{k-1}_{B}(M, U),
\]

\vspace{-5pt}

\[
 \lef_{k}^{U} \oplus \lef_{k-1}^{U}: H^{k}_{B}(M, U) \oplus H^{k-1}_{B}(M, U) \longrightarrow H^{2n+1-k}_{B}(M, U) \oplus H^{2n+2-k}_{B}(M, U),
\]

\vspace{-5pt}

\[
\sigma: H^{2n+1-k}_{B}(M, U) \oplus H^{2n+2-k}_{B}(M, U) \longrightarrow H^{2n+2-k}_{B}(M, U) \oplus H^{2n+1-k}_{B}(M, U),
\]

\vspace{-5pt}

\begin{equation*}
[ (Id,\eps_\omega)]: H^{2n+2-k}_{B}(M, U) \oplus H^{2n+1-k}_{B}(M, U) \longrightarrow H^{2n+2-k}_{}(M),
\end{equation*}
where $\sigma$ is the canonical involution and $ \lef_r^U: H^r_B(M, U) \longrightarrow H^{2n+1-r}_B(M, U)$ is the isomorphism whose graph is the basic Lefschetz relation $R_{\lef_r}^B$.
We define the map $\lef_k\colon H^k(M) \longrightarrow H^{2n+2-k}(M)$ by
\[
\lef_k = [(Id,\eps_\omega) ] \circ \sigma \circ ( \lef_{k}^{U} \oplus
\lef_{k-1}^{U}) \circ [(Id,\eps_\omega)]^{-1}.
\]
It is straightforward that $\lef_k$ is an isomorphism. It is left to show that
for any closed form $\gamma\in \Omega^k(M)$ that satisfies \eqref{proper-beta-prima}, we
have
\begin{equation}\label{Lef-k-gamma}
\lef_k[\gamma] = [\epsilon_\eta L^{n-k}(Li_U\gamma - \epsilon_{\omega}\gamma)].
\end{equation}
Let $\gamma_1= i_U(\omega\wedge \gamma)$ and $\gamma_2=i_U\gamma$.
Note that the forms $\gamma_1$ and $\gamma_2$ are closed and obviously $i_U\gamma_1 =
i_U\gamma_2=0$. Thus they represent certain classes in $H^*_B(M, U)$. We have
\begin{equation*}
	[(Id,\eps_\omega)]^{-1}[\gamma] = ( [\gamma_1]_U,
	[\gamma_2]_U).
\end{equation*}
Moreover,
$\gamma_1$ satisfies the conditions
\begin{equation*}
	i_V \gamma_1 =0,\  \ L^{n-k+1} \gamma_1=0,\\
\end{equation*}
and $\gamma_2$ satisfies
\begin{equation*}
		i_V \gamma_2 =0,\  \ L^{n-k+2} \gamma_2=0.
\end{equation*}
Thus
\begin{equation}
	\label{first}
	 \lef_k^U[\gamma_1]_U = [\eps_\eta L^{n-k}\gamma_1]_U = [\eps_\eta L^{n-k}
i_U(\omega\wedge \gamma)]_U
\end{equation}
	and
	\begin{equation}	
		\label{second}
		 \lef_{k-1}^U [\gamma_2]_U = [\eps_\eta
		L^{n-k+1} \gamma_2]_U = [\eps_\eta L^{n-k+1} i_U \gamma]_U.
	\end{equation}
Now, we get by definition of the map $\lef_k$ that
\begin{align*}
	\lef_k[\gamma]& = [\eps_\eta L^{n-k+1}i_U \gamma + \eps_\omega \eps_\eta
	L^{n-k} i_U(\omega \wedge\gamma)]
	=[\epsilon_\eta L^{n-k}(Li_U\gamma - \epsilon_{\omega}\gamma)].
\end{align*}
This proves the first part of Theorem~\ref{basic-Hard-Lef-property}.
	\medskip

\noindent{\bf Proof of $(2)$ in Theorem \ref{basic-Hard-Lef-property}}

Assume that the l.c.s. structure $(\omega, \eta)$ is Lefschetz (or,
equivalently, basic Lefschetz)
and denote by
\[
\lef_k^U: H^k_B(M, U) \longrightarrow H^{2n+1-k}_{B}(M, U), \; \; 1 \leq k \leq n,
\]
the isomorphism whose graph is the basic Lefschetz relation $R_{\lef_k}^B$.
Since $U$ is a parallel vector field, we get that the mean curvature
$\kappa$ of the
 foliation $\left\langle U \right\rangle$ is zero. Moreover, the characteristic form
 of $\left\langle U \right\rangle$ is $\omega$ (see page 69 of~\cite{tondeur}). Therefore, by Theorem~7.54
in~\cite{tondeur}, we have a non-degenerate pairing
\begin{equation*}
	\left\langle \cdot,\cdot \right\rangle\colon H^{2n+1-k}_B(M, U) \otimes
	H^k_B(M, U)\longrightarrow \R
\end{equation*}
given by
\begin{equation}
	 \left\langle [\mu]_U,\left[ \beta \right]_U \right\rangle = \int_M
	\omega\wedge \mu \wedge \beta.
\end{equation}

Since $\lef_k^U$ is an isomorphism, we get that the bilinear form
\begin{equation}
	\psi := \left\langle \cdot,\cdot \right\rangle \circ (\lef_k^U,Id)	
	\label{psi}
\end{equation}
on $H^k_B(M, U)$ is non-degenerate. Let $\alpha$ and $\beta$ be  closed basic $k$-forms on $M$. Then, for any  $\alpha'\in [\alpha]_U$ and
$\beta'\in [\beta]_U$ such that
\begin{align*}
	i_V \alpha' &=0, & L^{n-k+1} \alpha' & = 0,& i_V \beta' & = 0, &
	L^{n-k+1}\beta'& = 0,
\end{align*}
 we get that
 \begin{align*}
	 \psi( [\alpha]_U, [\beta]_U )& = \int_M \omega \wedge \eps_\eta L^{n-k}
	 \alpha'\wedge \beta'\\& = (-1)^{k^2} \int_M \omega \wedge \eps_\eta
	 L^{n-k}\beta' \wedge \alpha' = (-1)^k \psi ( [\beta]_U, [\alpha]_U).
 \end{align*}
 This shows that $\psi$ is symmetric if $k$ is even, and skew-symmetric
 if $k$ is odd.

Consequently, for odd $k$ between $1$ and $n$, $c_k(M) := \dim H^k_B(M, U)$
is even. On the other hand, from Theorem \ref{basic-cohomology-DeRham-cohomology} we deduce that
\[
b_k(M) = c_k(M) + c_{k-1}(M),
\]
which implies that
\[
b_k(M)-b_{k-1}(M) = c_k(M) - c_{k-2}(M).
\]
Since $c_{k}(M)$ and $c_{k-2}(M)$ are both even, we get that $b_k(M)-b_{k-1}(M)$ is also even.
\end{proof}

Finally, we derive Theorem~\ref{main-theorem} from the previously proved
results.
\begin{proof}[(Proof of Theorem \ref{main-theorem}).]
By Theorem~\ref{basic-theorem}, every compact Vaisman manifold satisfies the basic Lefschetz property.
Thus  from 	Theorem~\ref{basic-Hard-Lef-property}, we get that every Vaisman manifold has the Lefschetz property.
\end{proof}
\section{Examples of non Vaisman compact Lefschetz l.c.s. manifolds}
\label{sec:examples}
In this section, we will construct examples of compact l.c.s. manifolds of the first kind which do not satisfy the Lefschetz property. We will also present an example of a compact Lefschetz l.c.s. manifold of the first kind which does not admit compatible Vaisman metrics.
In order to do this we will use the following proposition.
\begin{proposition}\label{lcs-regular}
	Let $M$ be a $(2n+2)$-dimensional compact l.c.s. manifold of the first
	kind such that the space of orbits $N$  of the Lee vector field
	$U$ of $M$ is a quotient manifold.
	\begin{enumerate}[(i)]
		\item If $\pi\colon M \longrightarrow N$ is the canonical projection then
			there exists a contact $1$-form $\eta_N$ on $N$
			 such that $\pi^* \eta_N= \eta$, $\eta$ being the anti-Lee $1$-form of $M$. Moreover, the anti-Lee vector field $V$ of $M$ is $\pi$-projectable and its
			 projection $\xi\in \vect(N)$
			 is the Reeb vector field of the contact manifold
			 $(N,\eta_N)$.
\item There exists a Riemannian metric $g$ on $M$ such that $U$
	is parallel and unitary with respect to $g$, and, in addition,
	\begin{equation*}
		\omega(X) = g(X,U),\ \mbox{ for all } X\in \vect(M),
	\end{equation*}
	where $\omega$ is the Lee $1$-form of $M$.
	\end{enumerate}
\end{proposition}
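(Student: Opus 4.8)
The plan is to prove both parts by explicit constructions, exploiting that the orbit foliation $\langle U\rangle$ is simple (by hypothesis $N$ is a quotient manifold) and that $\omega$ is closed with $\omega(U)=1$. For part $(i)$ I would first check that $\eta$ is basic for the foliation by orbits of $U$: by definition of the Lee vector field $i_U\eta=\eta(U)=0$ and $i_Ud\eta=0$, hence $\mathcal{L}_U\eta=i_Ud\eta+d(i_U\eta)=0$. Since $\pi\colon M\to N$ is a surjective submersion whose fibres are precisely the connected orbits of $U$, there is then a unique $1$-form $\eta_N$ on $N$ with $\pi^*\eta_N=\eta$, so also $\pi^*d\eta_N=d\eta$. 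To see $\eta_N$ is contact I would contract the volume form of $M$ with $U$: using $i_U\eta=0$, $i_Ud\eta=0$ and $\omega(U)=1$ one gets $i_U\bigl(\omega\wedge\eta\wedge(d\eta)^n\bigr)=\eta\wedge(d\eta)^n$, so $\eta\wedge(d\eta)^n=\pi^*\bigl(\eta_N\wedge(d\eta_N)^n\bigr)$ is nowhere zero; as $\pi$ is a submersion $\pi^*$ is injective on forms, whence $\eta_N\wedge(d\eta_N)^n$ is a volume form on $N^{2n+1}$.

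Still in part $(i)$, I would next show $[U,V]=0$. By the Cartan identity $i_{[U,V]}=\mathcal{L}_Ui_V-i_V\mathcal{L}_U$ applied to $\Omega=d\eta+\eta\wedge\omega$, together with $i_V\Omega=\omega$ (a direct computation from $\eta(V)=1$, $\omega(V)=0$, $i_Vd\eta=0$), $\mathcal{L}_U\Omega=0$ and $d\omega=0$, one obtains $i_{[U,V]}\Omega=\mathcal{L}_U\omega=i_Ud\omega+d(i_U\omega)=0$; since $\Omega$ is non-degenerate, $[U,V]=0$. Thus the flow of $U$ preserves $V$, so $V$ is $\pi$-projectable; set $\xi:=\pi_*V$. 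Finally $\pi^*(i_\xi\eta_N)=i_V\eta=1$ and $\pi^*(i_\xi d\eta_N)=i_Vd\eta=0$, and injectivity of $\pi^*$ gives $i_\xi\eta_N=1$, $i_\xi d\eta_N=0$, i.e.\ $\xi$ is the Reeb vector field of $(N,\eta_N)$.

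For part $(ii)$ I would fix any Riemannian metric $g_N$ on $N$ and put $g:=\pi^*g_N+\omega\otimes\omega$. This $g$ is positive definite: if $g(X,X)=0$ then $\pi_*X=0$, so $X=fU$ for a smooth function $f$ (as $U$ spans the one-dimensional fibres), and then $f^2=\omega(X)^2=0$. One has $g(U,U)=\omega(U)^2=1$ and, since $\pi^*g_N(X,U)=g_N(\pi_*X,\pi_*U)=0$, also $g(X,U)=\omega(X)$ for all $X\in\vect(M)$. It remains to prove $\nabla U=0$, equivalently $\nabla\omega=0$. Using the decomposition $(\nabla_X\omega)(Y)=\tfrac12\bigl((\mathcal{L}_Ug)(X,Y)+d\omega(X,Y)\bigr)$ it suffices to show $\mathcal{L}_Ug=0$, since $d\omega=0$. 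Now $\mathcal{L}_U(\omega\otimes\omega)=0$ because $\mathcal{L}_U\omega=d(i_U\omega)=d1=0$, and $\mathcal{L}_U(\pi^*g_N)=0$ because the flow $\phi_t$ of $U$ (complete by compactness of $M$) satisfies $\pi\circ\phi_t=\pi$, hence $\phi_t^*(\pi^*g_N)=\pi^*g_N$. Therefore $\mathcal{L}_Ug=0$, and $U$ is parallel (and, as already noted, unitary).

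I expect the only genuinely delicate point to be part $(ii)$: one must produce a metric for which $U$ is \emph{parallel}, not merely Killing, and this is exactly what dictates the choice $g=\pi^*g_N+\omega\otimes\omega$ and the appeal to the splitting of $\nabla\omega$ into its symmetric part $\mathcal{L}_Ug$ and antisymmetric part $d\omega$, both of which must vanish simultaneously. Apart from that, the work is a careful bookkeeping of the l.c.s.-of-the-first-kind axioms, the small recurring ingredient being injectivity of $\pi^*$, which is what upgrades statements about the pulled-back forms to the desired statements on $N$.
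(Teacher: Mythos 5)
Your proof is correct and, for part $(ii)$, follows essentially the same route as the paper: the metric $g=\pi^*g_N+\omega\otimes\omega$ is exactly the Riemannian-submersion metric with horizontal bundle $\ker\omega$ and $U$ unitary used there, and the conclusion that $U$ is parallel comes, as in the paper, from $U$ being Killing together with $d\omega=0$. For part $(i)$ the paper simply cites Vaisman, so your explicit verification (basicness of $\eta$, the contraction $i_U(\omega\wedge\eta\wedge(d\eta)^n)=\eta\wedge(d\eta)^n$, and $[U,V]=0$ via the non-degeneracy of $\Omega$) is a correct self-contained substitute for that reference.
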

\begin{proof}
	The first claim is well known and can be found in \cite{Va}. 	
	
We will prove $(ii)$.
Denote by $\fol$ the 	vector subbundle induced by the foliation $\omega=0$. Then $TM =
	\fol\oplus \left\langle U \right\rangle$.
    From an arbitrary Riemannian metric $g_N$ on $N$, we can define a
	Riemannian metric $g$ on $M$ such that
	\begin{equation*}
		\pi \colon (M,g) \longrightarrow (N,g_N)
	\end{equation*}
	is a Riemannian submersion with the horizontal subbundle $\fol$ and the
	vector field $U$ is unitary with respect to $g$. This implies that
	$\omega (X) = g(X,U)$. Moreover, it is easy to prove that $U$ is a
	Killing vector field. Thus, since the dual $1$-form to $U$ with respect
	to $g$ is $\omega$ and it is closed, we conclude that $U$
	is parallel.
\end{proof}

Under the same conditions as in Proposition~\ref{lcs-regular}, it is clear that the basic Lefschetz property for the l.c.s. manifold $M$ is equivalent to the Lefschetz property for the base contact manifold $N$.  We recall that on a contact manifold of dimension $2n+1$ with contact structure $\eta$ and Reeb vector field $\xi$, we may define the Lefschetz relation between the de Rham cohomology groups $H^{k}(N)$ and $H^{2n+1-k}(N)$, for $0 \leq k \leq n$, by
\[
R_{\lef_k} = \{ [\beta], [\eps_\eta L^{n-k}\beta] ) \;|\; \beta \in \Omega^k(N), d\beta = 0, i_{\xi}\beta = 0, L^{n-k+1}\beta = 0 \},
\]
where $L$ is the operator $\eps_{d\eta}$.
Then, $\eta$ is said to be a contact Lefschetz structure if the relation $R_{\lef_k}$ is the graph of an isomorphism $\lef_k: H^{k}(N) \to H^{2n+1-k}(N)$ (see \cite{CaNiYu}).

 So, using Theorem \ref{basic-Hard-Lef-property} and Proposition~\ref{lcs-regular}, we deduce the following result.
\begin{corollary}\label{examples}
Let $M$ be a $(2n+2)$-dimensional compact l.c.s. manifold of the first
	kind such that the space of orbits of the Lee vector field
is the contact manifold $N$. Then, the following conditions are equivalent:
\begin{enumerate}
\item
The l.c.s. structure on $M$ satisfies the Lefschetz property.

\item
The l.c.s. structure on $M$ satisfies the basic Lefschetz property.

\item
The contact structure on $N$ satisfies the Lefschetz property.
\end{enumerate}
\end{corollary}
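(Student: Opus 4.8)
The plan is to prove Corollary~\ref{examples} by establishing the implications $(1)\Leftrightarrow(2)$ and $(2)\Leftrightarrow(3)$ separately, relying on Proposition~\ref{lcs-regular} to put ourselves in the hypotheses of Theorem~\ref{basic-Hard-Lef-property}. First I would invoke part $(ii)$ of Proposition~\ref{lcs-regular}: since the space of orbits $N$ is a quotient manifold, there exists a Riemannian metric $g$ on $M$ making the Lee vector field $U$ unitary and parallel, with $\omega(X) = g(X,U)$ for all $X \in \vect(M)$. These are exactly the hypotheses of Theorem~\ref{basic-Hard-Lef-property}, so part $(1)$ of that theorem immediately gives that the l.c.s.\ structure on $M$ is Lefschetz if and only if it is $U$-basic Lefschetz. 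This settles $(1)\Leftrightarrow(2)$.

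For $(2)\Leftrightarrow(3)$, the key observation is that the $U$-basic de Rham complex $\Omega^*_B(M,U)$ is canonically isomorphic (via $\pi^*$) to the full de Rham complex $\Omega^*(N)$ of the base contact manifold: a form on $M$ that is killed by $i_U$ and $\lie_U$ is precisely the pullback of a form on $N$, because the orbits of $U$ are the fibres of $\pi$. Under this identification the operators match up: $\pi^*\eta_N = \eta$ and $\pi^*(d\eta_N) = d\eta$ by part $(i)$ of Proposition~\ref{lcs-regular}, hence $\pi^*$ intertwines $\eps_{\eta_N}$ with $\eps_\eta$ and $L = \eps_{d\eta_N}$ (on $N$) with $L = \eps_{d\eta}$ (on $M$). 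Likewise $\pi_*V = \xi$, so $i_\xi$ on $\Omega^*(N)$ corresponds to $i_V$ on $\Omega^*_B(M,U)$. Therefore the defining conditions of the contact Lefschetz relation $R_{\lef_k}$ on $N$ (namely $d\beta = 0$, $i_\xi\beta = 0$, $L^{n-k+1}\beta = 0$, with image $[\eps_{\eta_N}L^{n-k}\beta]$) translate verbatim into the defining conditions of the $U$-basic Lefschetz relation $R^B_{\lef_k}$ on $M$. Consequently $R_{\lef_k}$ is the graph of an isomorphism $H^k(N)\to H^{2n+1-k}(N)$ if and only if $R^B_{\lef_k}$ is the graph of an isomorphism $H^k_B(M,U)\to H^{2n+1-k}_B(M,U)$, which is exactly the equivalence $(2)\Leftrightarrow(3)$.

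Combining the two equivalences completes the proof. I expect the only real point requiring care to be the verification that $\pi^*$ is a bijection $\Omega^*(N)\to \Omega^*_B(M,U)$ commuting with $d$ and inducing an isomorphism on cohomology; this is standard for the basic complex of a regular foliation whose leaf space is a manifold, and the remaining intertwining identities for $\eps_\eta$, $L$, and $i_V$ are immediate from Proposition~\ref{lcs-regular}$(i)$ once that bijection is in place. The rest is a formal transport of the defining relations through the isomorphism.
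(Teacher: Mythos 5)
Your proposal is correct and follows essentially the same route as the paper: Proposition~\ref{lcs-regular}$(ii)$ puts $M$ in the hypotheses of Theorem~\ref{basic-Hard-Lef-property}, giving $(1)\Leftrightarrow(2)$, and the identification of $\Omega^*_B(M,U)$ with $\Omega^*(N)$ via $\pi^*$ transports the $U$-basic Lefschetz relation to the contact Lefschetz relation on $N$, giving $(2)\Leftrightarrow(3)$. The paper merely asserts this second equivalence as clear; your write-up supplies the (correct) details of why $\pi^*$ intertwines $d$, $\eps_\eta$, $L$, and $i_V$ with their counterparts on $N$.
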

Now, let $N$ be a compact contact manifold and consider  the product manifold $M= N \times S^1$ with its  standard l.c.s. structure of the first kind  (see Section \ref{Compact-Sasakian-Vaisman}). Then, it is clear that the space of orbits of the Lee vector field of $M$ is $N$. Thus, using Corollary \ref{examples} and taking as $N$ the examples of non-Lefschetz compact contact manifolds considered in \cite{CaNiMaYu}, we obtain  examples of compact l.c.s. manifolds of the first kind which satisfy the following conditions:
\begin{enumerate}
\item
Their Betti numbers satisfy relations (\ref{Betti-numbers}) in Theorem \ref{basic-Hard-Lef-property}.

\item
They do not satisfy neither the Lefschetz property nor the basic Lefschetz property (and, therefore, they do not admit compatible Vaisman metrics).
\end{enumerate}
Note that $(1)$ follows using that $b_k(N)$ is even if $k$ is odd and $k \leq n$, with $\dim N = 2n+1$ and $b_k(N)$ the $k$-th Betti number of $N$.

On the other hand, in \cite{CaNiMaYu2}, we present an example of a compact Lefschetz contact manifold $N$ which does not admit any Sasakian structure. So, the standard l.c.s. structure of the first kind on $M = N \times S^1$ is Lefschetz and basic Lefschetz. However, $M$ does not admit compatible Vaisman metrics.

We conclude stating an open problem concerning these topics.
It would be interesting  to find examples of compact l.c.s. manifolds of the first kind which satisfy the basic Lefschetz property but they do not satisfy the Lefschetz property and vice versa.


\end{document}